\def\avint[#1]{\mathop{\,\rlap{-}\!\!\!\int_{#1}}\nolimits}
\def \w{{\bf w}}
\def \vv{{\bf v}}
\def \f {{\bf f}}
\def \S {{\bf S}}
\def \K {{\bf K}}
\def \A {{\bf A}}
\def \DD {{\bf D}}
\def \B {{\bf B}}
\def \C {{\bf C}}
\def \iO{\int_{\Omega}}
\def \R{\mathbb{R}}
\def \N{\mathbb{N}}
\def \O{\Omega}
\def \na{\nabla}
\def \q{{\bf q}}
\def\be{\begin{equation}}
\def\ba{\begin{array}}
\def\ea{\end{array}}
\def\ee{\end{equation}}
\def\displ{\displaystyle \vspace{6pt}}
\newcommand{\cD}{{\bf \mathcal{D} v}}
\def \Scv{\S(c, \cD)}
\def \Scvn{{\S(c^{n,m}, \cD^{n,m})}}
\def \D {\mathcal{D}}
\def \DD{{\bf D}}
\def \iR {\int_{B(x_0,R)}}
\def \iRR {\int_{A_R}}
\def \iRt {\int_{B(x_0,\frac{R}{2})}}
\def \imR {\fint_{B(x_0,R)}}
\def \imRt {\fint_{B(x_0,\frac{R}{2})}}
\def \pa{\partial}
\newcommand{\Div}{\operatorname{div}}
\newcommand{\Per}{\operatorname{per}}
\numberwithin{equation}{section}
\font\sc=cmcsc10
\newtheorem{defi}
{Definition}
\newtheorem{tho}
{Theorem}
\newtheorem{lem}
{Lemma}
\newcommand{\tha}{\theta_A}
\newcommand{\g}{\mathbf g}
\newcommand{\meanless}[2]{(#1)_{0,#2}}
\newcommand{\diver}{\operatorname{div}}
\begin{document}

\title[Classical solution to concentration dependent power-law fluid]{On the existence of classical solution to the steady flows of generalized Newtonian fluid with concentration dependent power-law index}

\author[A.~Abbatiello]{Anna Abbatiello}
\address{Dipartimento di Matematica e Fisica,  Universit\`{a} degli Studi della Campania  ``L. Vanvitelli", viale Lincoln, 81100 Caserta, Italy}
\email{anna.abbatiello@unicampania.it}

\author[ M.~Bul\'{\i}\v{c}ek]{Miroslav Bul\'{\i}\v{c}ek}
\address{Charles University, Faculty of Mathematics and Physics, Mathematical Institute, Sokolovsk\'{a} 83, 186~75, Prague, Czech Republic}
\email{mbul8060@karlin.mff.cuni.cz}

\author[P.~Kaplick\'{y}]{Petr Kaplick\'{y}}
\address{Charles University, Faculty of Mathematics and Physics, Mathematical Institute, Sokolovsk\'{a} 83, 186~75, Prague, Czech Republic}
\email{kaplicky@karlin.mff.cuni.cz}

\thanks{A.~Abbatiello is partially supported by National Group of Mathematical Physics (GNFM-INdAM)  via GNFM Progetto Giovani 2017. A.~Abbatiello is also grateful to Charles University for the hospitality during her visit when the work was performed. M.~Bul\'{\i}\v{c}ek's work was supported by the Czech Science Foundation (grant no. 16-03230S). M.~Bul\'{\i}\v{c}ek and P.~Kaplick\'{y} are members of Ne\v{c}as Center for Mathematical Modeling.}

\subjclass[2000]{Primary: 35Q35 ; Secondary: 76Z05 (76D03, 76A05)}
\keywords{synovial fluid, $C^{1,\alpha}$ regularity, generalized viscosity, variable exponent, steady $p$-Navier--Stokes system}

\begin{abstract}
Steady flows of an incompressible homogeneous chemically reacting fluid are described by a coupled system, consisting of the generalized  Navier--Stokes equations and convection - diffusion equation with  diffusivity dependent on the concentration and the shear rate.  Cauchy stress behaves like power-law fluid with the exponent depending on the concentration. We prove the existence of a classical solution for the two dimensional periodic case whenever the power law exponent is above one and less than infinity.
\end{abstract}

\maketitle

%
%
%
%
%
%
\section{Introduction}\label{S1}

The main goal of the paper is to prove the existence of a classical solution to a class of models describing the steady flow of an incompressible homogeneous chemically reacting fluid. More specifically, we are interested in regularity properties of  the velocity $\vv :\O \rightarrow \R^d$, the pressure  $\pi : \O \rightarrow \R$
and the concentration distribution $c: \O \rightarrow \R_+$ that solve the following system of partial differential equations
\be\label{problem}
\begin{split}
\Div (\vv \otimes \vv)- \Div \S(c,\cD) &= -\na \pi + \f,\\
\Div \vv &=0,\\
\Div  {c \vv} - \Div \q_c(c, \na c, \cD)&= -\Div \g
\end{split}
\ee
in a domain $\O \subset \R^{d}$. Here $\f :\O \rightarrow \R^d $ represents a given density of the volume forces, $\cD :\O \rightarrow \R^{d\times d}$ denotes the symmetric part of the velocity gradient $\na \vv$ and $\S(c,\cD):\O \rightarrow \R^{d\times d}$ is the constitutively determined part of the Cauchy stress tensor, $\q_c(c, \na c, \cD):\O \rightarrow \R^d$ is the concentration flux and $\g :\O \rightarrow \R^d$ represents a source term for the chemical concentration. The first equation in \eqref{problem} is the balance of linear momentum, the second one is the incompressibility constraint and the last equation (convection--diffusion--reaction) describes the conservation of the chemical concentration. Although, we will be interested only in two dimensional results, i.e., for $d=2$, we keep the notation for general dimension in this introductory part in order to comment all available results completely.

This model was developed by M\'alek \& Rajagopal in \cite{MR} to simplify  the description of flows of complicated mixtures. Indeed, it was shown in \cite{MR} that it is a proper model for mixtures, where there is just one component that influences the mechanical properties of the fluid and this influence is then encoded into the Cauchy stress. The concentration of this particular component then fulfils the third equation in \eqref{problem}. This model with a proper choice of the form for $\S$ and $\q_c$ can be used for many complex materials as blood, synovial fluids, or in general biological fluids for example, see the thesis of Pust\v{e}jovsk\'a \cite{PT} or the corresponding paper \cite{HPMR}.

\subsection{Constitutive relations and boundary conditions}

The system \eqref{problem} must be equipped with the boundary conditions for $c$ and $\vv$ and completed by the constitutive relations for the Cauchy stress tensor $\S$ and the concentration flux $\q_c$. Since we are interested in the regularity theory, we simplify the paper by considering the spatial periodic conditions for $\vv$ and $c$. Although such a setting is nonphysical, it will be clear from the proof that we could obtain interior regularity result for realistic (e.g. Dirichlet condition) boundary conditions. Thus, in our setting the domain $\Omega$ will always be  a cube $[0,1]^d$ and all involved quantities are assumed to be periodic with respect to the cube $[0,1]^d$. In addition the emphasis will be given to two dimensional setting.

Concerning the form of the concentration flux, we shall assume that
\begin{equation}
\label{Ac1}
\q_c(c,\xi,\A):= \K(c, |\A|) \xi \qquad \textrm{ for all }(c,\xi,\A)\in \R\times \R^{d} \times \R^{d\times d},
\end{equation}
where $\K: \R^2 \to \R^{d\times d}$ is a continuous mapping fulfilling for all $(c,\xi,\A)\in \R\times \R^{d} \times \R^{d\times d}$
\be\label{concentration-flux-vector}
\begin{split}
|\K(c, |\A|)| &\leq K_2 ,\\
\K(c, |\A|) \xi  \cdot \xi &\geq K_1 |\xi|^2,
\end{split}
\ee
where $K_1$, $K_2$ are positive constants.

For the part of the Cauchy stress $\S$, we consider  that it is of the form
\begin{equation}
\label{AS1}
\S(c,\DD)=2\nu(c, |\DD|) \DD,
\end{equation}
where $\nu :\R^2 \to \R$ is a generalized kinematic viscosity. Furthermore, we require that there exist numbers $1<p^{-}\le 2 \le p^+ <\infty$ and a Lipschitz continuous function $p:\R \to [p^{-},p^+]$ such that for all $(c,\DD)\in \R \times \R^{d\times d}$ we have
\begin{align} \label{H1}
\left|\frac{\pa \S(c,\DD)}{\pa \DD}\right| &\leq
K_2(1+|\DD|)^{p(c)-2},\\
\label{H2}
\frac{\pa \S(c,\DD)}{\pa \DD}\cdot  (\B\otimes \B)&\geq K_1 (1+|\DD|)^{p(c)-2}|\B|^2,\\
\label{H3}
\left|\frac{\pa \S(c,\DD)}{\pa c} \right| &\leq K_2(1+|\DD|)^{p(c)-1}  \log(2+|\DD|),
\end{align}
where we used the notation $(\B\otimes \B)_{ijhk}=B_{ij}B_{hk}$.
As a direct consequence of assumptions \eqref{AS1}--\eqref{H3} one can also obtain that (after a possible change of constants $K_1$ and $K_2$) for all $(c,\DD)$ there holds (see \cite[Chapter 5]{MNRR} for detailed proof)
\begin{equation}
K_1 (1+|\DD|)^{p(c)-2}\le \nu(c,|\DD|) \le K_2(1+|\DD|)^{p(c)-2}.\label{H4}
\end{equation}

The typical model we have in mind is of the form
\be \label{model}
\S(c,\DD)= (1+\gamma(c)+ |\DD |^2)^{\frac{p(c)-2}{2}} \DD,
\ee
where $\gamma$ is a smooth bounded nonnegative function. Note that for proper functions $p$ and $\gamma$, the model \eqref{model} satisfies assumptions \eqref{H1}--\eqref{H3}.  It is important to notice that the presence of a function $p$ dependent on the concentration $c$ in the exponent is essential from the modelling point of view. It was well documented in \cite{PP,PT,HPMR} that the model \eqref{model} with properly chosen power function $p$ best fits the experimental data. Therefore, it is also our goal to cover this case in the paper.

\subsection{Notion of weak solution  and main result}
Since the constitutive relation involve the $p$-growth, which possibly depends on the concentration and therefore also on the spatial variable $x$, we need to introduce the corresponding Sobolev and Lebesgue spaces with variable exponent. Since, we also deal with the spatially periodic conditions, we fix the mean values of corresponding functions over the set $\Omega$ to zero. Hence, for a given function $p$ and periodic $c\in \mathcal{C}^{0,\alpha}(\mathbb{R}^2)$ for some $\alpha>0$, we introduce
\begin{align*}
\mathcal{C}^{\infty}_{\Per, \Div}&:= \{\vv\in \mathcal{C}^{\infty}(\R^d;\R^d): \, \int_{\Omega}\vv =0, \, \Div \vv=0, \, \vv \textrm{ is $\Omega$-periodic}\},\\
W^{1,p(c)}_{\Per, \Div}&:=\overline{\{\mathcal{C}^{\infty}_{\Per, \Div}\}}^{\|\cdot\|_{1,p(c)}}, \qquad W^{-1,p'(c)}_{\Per, \Div}:=(W^{1,p(c)}_{\Per, \Div})^*,
\end{align*}
where we use the equivalent norm $\|\vv\|_{1,p(c)}:= \|\nabla \vv\|_{p(c)}$. Since $p(c)$ is H\"{o}lder continuous and $1<p^-\le p^+<\infty$, these spaces are reflexive and separable. In addition, we also know that the the Korn inequality holds true, i.e., there exists a constant $C$ depending only on $p$ and $c$ such that
\begin{equation}
\|\vv\|_{1,p(c)}\le C\|\cD\|_{p(c)}.\label{Korn}
\end{equation}
Notice here that the constant $C$ depends on $c$ via the modulus of continuity of $c$. For details about the variable exponent function spaces we refer to \cite{DHHR}. Next, we also keep notation for standard Lebesgue and Sobolev spaces and the subscript $_{\Per}$ will denote that we consider $\Omega$-periodic functions having zero mean value over $\Omega$.

With this choice of function spaces we can define notion of a weak solution.
\begin{defi}\label{DF1}
Let be $\f \in W_{\Per, \Div}^{-1, (p_-)'}$ and $\g\in L^{q}_{\Per}(\O, \R^d)$ for some $q>d$. Let $\S$ satisfy \eqref{H1}--\eqref{H3}. We say that a couple $(c,\vv)$ is a weak solution to \eqref{problem}  if
\be\label{Def.space}
c\in W^{1,2}_{\Per}(\O) \cap \mathcal{C}^{0,\alpha}(\Omega), \qquad \vv \in W_{\Per, \Div}^{1, p(c)},
\ee
for some $\alpha >0$ and the system \eqref{problem} is fulfilled in the following sense
\begin{align}\label{wf1}
  \iO \Scv : \D \boldsymbol \psi \,dx  &=  \iO (\vv\otimes \vv) : \na \boldsymbol \psi\,dx+\langle \f, \boldsymbol \psi \rangle &&\forall \boldsymbol \psi \in \mathcal{C}^{\infty}_{\Per, \Div},\\
\label{wf2}
\iO \K(c, |\cD|) \na c \cdot \na \varphi \,dx &= \iO (c \vv+ \g) \cdot \na \varphi \,dx &&\forall \boldsymbol \varphi \in W_{\Per}^{1, 2}.
\end{align}
\end{defi}
In addition we have the following existence result.
\begin{tho}
Let all assumptions of Definition~\ref{DF1} be satisfied and
\begin{equation}\label{pmin}
p^->\frac{2d}{d+2}.
\end{equation}
Then there exists a weak solution provided that one of the following holds
\begin{itemize}
\item[i)] $p$ is independent of $c$,
\item[ii)] $p^->d/2$.
\end{itemize}
\end{tho}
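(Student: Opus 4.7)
The plan is to construct the weak solution through a Schauder fixed-point argument on the concentration coupled with a Galerkin-monotone-operator approximation for the momentum equation. First I would define a solution map $T: \bar c \mapsto c$ as follows. Given a periodic Hölder continuous $\bar c$, the exponent $p(\bar c)$ is Hölder hence log-Hölder continuous, so the variable exponent space $W_{\Per, \Div}^{1, p(\bar c)}$ and the standard machinery from \cite{DHHR} are at our disposal. Existence of $\vv$ solving \eqref{wf1} with $c$ replaced by $\bar c$ under the assumption \eqref{pmin} follows from the established theory for variable exponent $p(x)$-Navier-Stokes systems: one combines a Galerkin scheme with Minty's trick for the nonlinear stress and the Lipschitz truncation method to compensate for the low integrability of $\vv \otimes \vv$ when $p^-$ approaches $\frac{2d}{d+2}$. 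Once $\vv$ is known, the linear uniformly elliptic equation \eqref{wf2} for $c$, with bounded coefficients $\K(\bar c, |\D \vv|)$ and right-hand side in divergence form, has a unique zero-mean weak solution $c \in W^{1,2}_{\Per}$ by Lax-Milgram; the convective part $c \vv$ is absorbed using $\Div \vv = 0$ and periodicity.

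The crucial step is to upgrade $c$ to $C^{0,\alpha}$ with a quantitative estimate depending only on the data. Here De Giorgi-Nash-Moser theory applied to \eqref{wf2}, rewritten as $-\Div(\K \nabla c) + \vv \cdot \nabla c = -\Div \g$, yields an $L^\infty$ bound and $c \in C^{0,\alpha}(\Omega)$, provided $\vv \in L^r$ for some $r > d$ and $\g \in L^q$ for $q > d$. In case (i) the exponent $p$ is fixed and \eqref{pmin} guarantees through Sobolev embedding that $\vv \in L^{p^*}$ with $p^* > d$. In case (ii), the hypothesis $p^- > d/2$ gives $\vv \in L^{p^{-*}}$ with $p^{-*} = \frac{d p^-}{d - p^-} > d$. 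In both cases one obtains a uniform Hölder bound on $c$, so $T$ sends a suitable ball of $C^{0,\alpha}$ into a relatively compact subset of $C^{0,\alpha'}$ for some $\alpha' < \alpha$, and Schauder's fixed-point theorem applies once continuity of $T$ is checked.

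The main obstacle is the passage to the limit in case (ii), where the function space for the velocity depends on the unknown concentration itself. If $\bar c_n \to \bar c$ in $C^{0,\alpha}$ and $\vv_n$ are the associated velocities, one must give a meaning to the limit $\vv$ in $W^{1,p(\bar c)}_{\Per, \Div}$ and pass to the limit in the nonlinear stress $\S(\bar c_n, \D \vv_n)$. The strategy is to combine uniform $L^{p^-}$-gradient estimates with a shift of the power function, exploiting the Lipschitz continuity of $p(\cdot)$ through \eqref{H3} and the uniform modulus of continuity of $\bar c_n$ to identify the weak limit of $\S$ via Minty monotonicity. The role of the assumption $p^- > d/2$ in case (ii) is precisely to ensure that the Hölder norm of the next concentration $c_n$ is uniformly controlled in terms only of the data, rather than of the modulus of continuity of the input $\bar c_n$, thereby closing the iteration without an \emph{a priori} loss in regularity.

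Once a fixed point $(c, \vv)$ is obtained, the weak formulations \eqref{wf1}--\eqref{wf2} are verified by a final passage to the limit in the approximation, relying on almost-everywhere convergence of the velocity gradients (delivered again by a Lipschitz-truncation type argument) and the monotonicity of $\S$, while the regularity class \eqref{Def.space} follows directly from the construction.
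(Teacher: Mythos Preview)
The paper does not prove this theorem at all; it simply refers to \cite{BuMaRa09,BP,BPP} and remarks that the periodic setting is an easy modification of the Dirichlet case treated there. Your outline---a fixed-point scheme on the concentration, Galerkin plus monotone-operator and Lipschitz-truncation machinery for the velocity, and De Giorgi--Nash for the H\"older regularity of $c$---matches the strategy of those references (though \cite{BP,BPP} in fact run a simultaneous two-parameter Galerkin approximation for $(\vv,c)$ rather than a Schauder map on $c$ alone; both routes work). So you are essentially supplying what the paper leaves out, and the overall architecture is sound.

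There is, however, one concrete slip in your treatment of case (i). You claim that \eqref{pmin} yields $\vv \in L^{p^*}$ with $p^* > d$, but $p^* = dp^-/(d-p^-) > d$ is equivalent to $p^- > d/2$, and for $d \geq 3$ this is strictly stronger than $p^- > 2d/(d+2)$. (For $d=2$ the two thresholds coincide and your claim is correct.) Hence your De Giorgi step for $c \in C^{0,\alpha}$ does not close as written in case (i) when $d \geq 3$. Two observations repair this. First, when $p$ is constant the space $W^{1,p(c)}_{\Per,\Div}=W^{1,p}_{\Per,\Div}$ does not depend on $c$, so H\"older continuity of $c$ plays no structural role in the existence scheme and can be established \emph{a posteriori}; in particular the fixed-point iteration does not need to be run in a H\"older class at all. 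Second, and more substantively, the drift $\vv$ in the concentration equation is divergence-free, and it is precisely this antisymmetric structure that allows De Giorgi--Nash theory to deliver H\"older continuity of $c$ under integrability of $\vv$ weaker than the naive requirement $\vv\in L^r$, $r>d$; this is what the cited references exploit. Your reasoning for case (ii) is correct.
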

\begin{proof}
We do not prove this result here since it is an easy modification of corresponding proofs in \cite{BuMaRa09,BP,BPP}, where the same system \eqref{problem} is treated but completed by Dirichlet boundary condition. We would like to mention that the case with $p$ being independent of $c$ is treated in \cite{BuMaRa09}. The case of general $p$ but with the restriction $p^{-}>3d/(d+2)$ is proved in \cite{BP} and the case when \eqref{pmin} holds is discussed in \cite{BPP}.
\end{proof}

\subsection{Main results}
We see that the existence analysis can be understood as a completed task but the regularity of the solution remains open. Our goal is to show the existence of a classical solution in case that $d=2$. The key result is the following.
\begin{tho}\label{maintheorem}
Let all assumptions of Definition~\ref{DF1} be satisfied with $d=2$. Assume in addition that $\f \in L^{2+2r}(\O; \R^2)$ with some $r>0$.
Then there exists a couple $(c,\vv)$ that is a weak solution in sense of Definition~\ref{DF1}, which fulfills in addition
\begin{equation}
\cD \in \mathcal{C}^{0,\alpha}_{\Per}(\O;\R^2).\label{calpha}
\end{equation}
In addition, if $p$, $\S$ and  $\K$ are smooth mappings and $\f$ and $\g$ are smooth vector--valued functions, then the constructed weak solution is smooth as well.
\end{tho}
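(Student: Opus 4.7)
The plan is a three-step bootstrap on top of the existence theorem cited above. First, I construct an approximate solution via a Galerkin scheme with a strongly coercive perturbation of the constitutive law (for instance, replacing $\S(c,\cD)$ by $\S(c,\cD)+\varepsilon(1+|\cD|^{q-2})\cD$ for large $q$ and small $\varepsilon>0$). For the approximate system I test the momentum equation with second-order difference quotients of $\vv$ (morally $-\Delta \vv$); in the two-dimensional periodic setting the pressure drops out against the divergence-free test function, and the convective contribution $\Div(\vv\otimes\vv)\cdot \Delta\vv$ is controlled via Ladyzhenskaya's inequality combined with incompressibility. Together with the strong monotonicity of $\S$ coming from \eqref{H1}--\eqref{H2}, this produces an $L^{2}$ bound on second derivatives of $\vv$, hence by two-dimensional Sobolev embedding $\cD\in L^{q}_{\Per}(\O)$ for every finite $q$ and $\vv\in L^{\infty}_{\Per}(\O;\R^{2})$, uniformly in $\varepsilon$.

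Second, with $\vv\in L^{\infty}$ at hand, equation \eqref{wf2} is a linear divergence-form elliptic equation for $c$ with bounded measurable, uniformly elliptic coefficients (by \eqref{concentration-flux-vector}), bounded drift, and right-hand side in divergence form with $\g\in L^{q}$, $q>2$. The De~Giorgi--Nash--Moser theory then delivers $c\in \mathcal{C}^{0,\beta}_{\Per}(\O)$ for some $\beta>0$, uniformly in the approximation parameter. Through Lipschitz continuity of $p$, the map $x\mapsto p(c(x))$ is H\"{o}lder continuous, and the constitutive law \eqref{AS1}, \eqref{H1}--\eqref{H3} becomes a generalized Navier--Stokes system with a spatially H\"{o}lder variable exponent.

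The heart of the proof, and its main technical obstacle, is the third step: upgrading $\cD$ from $L^{q}$ to $\mathcal{C}^{0,\alpha}$. For this I would freeze the exponent $p(c(x))$ at the centre $x_{0}$ of a small ball, compare $\vv$ with the solution of the constant-exponent $p_{0}$-Stokes system (with $p_{0}:=p(c(x_{0}))$), and apply the two-dimensional $\mathcal{C}^{1,\alpha}$ regularity theory available for the constant-exponent problem (in the spirit of Kaplick\'{y}--M\'{a}lek--Star\'{a} and the authors' own previous work on variable exponent fluids). A Campanato-type excess-decay iteration then has to absorb simultaneously the variable-exponent perturbation (its size controlled by \eqref{H3} and the H\"{o}lder modulus of $c$), the inhomogeneity $\f\in L^{2+2r}$ via Morrey-type estimates, and the quadratic convective term, which can be closed precisely because $d=2$. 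The outcome is $\cD\in \mathcal{C}^{0,\alpha}_{\Per}(\O;\R^{2\times 2})$, and this regularity persists in the limit $\varepsilon\to 0$. The final smoothness claim under smooth data then follows by a standard Schauder bootstrap: once $\cD$ is H\"{o}lder continuous, the coefficient $\nu(c,|\cD|)$ is H\"{o}lder continuous, the linearized momentum equation is uniformly elliptic, and iterated Schauder estimates yield $\mathcal{C}^{k,\alpha}$ regularity of $(c,\vv)$ for every $k$.
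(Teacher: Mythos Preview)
Your strategy diverges from the paper's in the decisive third step, and that step as written has a concrete gap.

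The paper does not freeze the exponent and run a comparison/Campanato iteration at all. It truncates the viscosity at level $A$ (replacing $1+|\cD|$ by $\theta_A(\cD)$), so that $\S^A$ is uniformly elliptic with constants depending on $A$. Your Steps (1) and (2) are close in spirit to the paper: testing by $-\Delta\vv$ yields the uniform weighted bound $\|\theta_A^{(p(c)-2)/2}\nabla\cD\|_2\le C$, whence $\theta_A^{p(c)/2}\in W^{1,2}\hookrightarrow BMO$ uniformly in $A$, and $c$ is H\"older (the paper uses a reverse H\"older/Gehring argument rather than De~Giorgi--Nash--Moser, but the outcome is the same). For Step (3) the paper localises the second-gradient estimate to a Caccioppoli inequality, bounds the annular right-hand side by the \emph{same} weighted integral on the annulus via John--Nirenberg (uniformly in $A$ when $p(c)$ is away from $2$, and with an explicit factor $c_0^i(A)/c_1^i(A)$ otherwise), and applies Widman's hole-filling lemma. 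The resulting Morrey exponent $\mu_i$ is tracked explicitly in $A$, and the proof closes by contradiction: if $|\cD(y)|>A$ somewhere, the Campanato--Morrey embedding forces $(1+A)^{p_i^-/2}\le C(1+1/\mu_i)$, and after refining the covering so that $|p_i^+-p_i^-|\le 1/20$ this is impossible for large $A$. No comparison problem is ever solved, and no limit in the approximation parameter is taken: once $\|\cD\|_\infty\le A$, one has $\S^A\equiv\S$ identically.

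The gap in your plan is the $\varepsilon$-uniformity. With the perturbation $\varepsilon(1+|\cD|^{q-2})\cD$ still present, the frozen comparison problem is either a double-phase $p_0/q$ Stokes system, for which the constant-exponent 2D symmetric-gradient regularity you invoke is not available off the shelf, or it is pure $p_0$-Stokes, in which case the $\varepsilon$-term lands in the excess error with no $R$-decay and the Campanato iteration cannot close uniformly in $\varepsilon$. A clean repair is to let $\varepsilon\to 0$ \emph{before} Step~3 (the weighted second-derivative bound survives by lower semicontinuity) and then run the comparison directly on the limit, where $p(x)=p(c(x))$ is a fixed H\"older function and you are essentially redoing the 2D $p(x)$-Stokes regularity of Diening--Ettwein--R\r{u}\v{z}i\v{c}ka; that is defensible but is not what you wrote. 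The paper's truncation-plus-contradiction device sidesteps this issue entirely and, because it tracks all constants in $A$ by hand, is fully self-contained.
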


The system \eqref{problem}  with the  constitutive  equations \eqref{Ac1} and \eqref{model} have been studied by many authors during  last decades for the  case $p$ constant or  the case when $p(x)$ is a given H\"{o}lder continuous function. However, we  are still far from saying that the theory is unified and satisfactory. While for two dimensional case, we know  that the velocity gradient is always H\"{o}lder continuous, see \cite{DER}, \cite{KMS} and \cite{KMSJ}, the results for three dimensional setting is indeed incomplete, see  \cite{B}, \cite{BKR} and \cite{Sin}. To our best knowledge, we have only partial regularity result, see \cite{ER}, or the  global result but only for small data, see \cite{CG} and \cite{CGC}. In any dimension there is a result of partial regularity, see \cite{AcMi}. It is remarkable, that this lack of regularity results, or more precisely, the analytical problems are coming from the fact that we have to deal with the nonlinearity depending on the symmetric gradient. The case when the nonlinearity is depending only on the full gradient, was successfully treated in \cite{CriMa} and \cite{CriMaNS} for constant $p$. The problem we  have to face in this paper is even more delicate. We do not know the variable exponent $p$ a priori but it is a part of the solution. Therefore, we have to develop a new technique that is capable to handle this problem. In view of the results for given $p$, we also first naturally focus only on two dimensional case, see Theorem \ref{maintheorem}. It is however notable, that even for more dimensional setting one can still derive the estimates for the second derivatives of the velocity field, which then lead to the partial regularity result as in \cite{ER}, which we will discuss in the forthcoming paper.
Finally we want to point out here, that the method used in the paper is based on the Hole-Filling technique of Widman (see \cite{W}) and proper Poincar\'{e} weighted inequalities, and is completely new in the setting of non-Newtonian fluids.

\section{Approximative problem, existence of its solution and uniform $W^{2,2}$-regularity}
We start the proof by defining an auxiliary approximation. Fixing arbitrary $A>1$, $c\in \R$ and $\B\in \R^{d\times d}$, we define
\be\label{tha}
\begin{split}
\tha(\B)&:= (2+\min\{A^2,|\B|^2\})^{\frac12},\\
\nu_A(c,\B)&:=\nu(c, \min\{A,|\B|\}).
\end{split}
\ee
Then, we introduce  the approximated stress tensor $\S^A$  by
\be \label{c33}
\S^A(c, \cD):= \nu_A(c,|\cD|)\cD
\ee
and the corresponding approximated problem
\be\label{problemA}
\begin{split}
\Div (\vv \otimes \vv)- \Div \S^A(c,\cD) &= -\na \pi + \f,\\
\Div \vv &=0,\\
\Div  {c \vv} - \Div \q_c(c, \na c, \cD)&= -\Div \g.
\end{split}
\ee
Next, we will prove the existence of a solution $(c,\vv)$ to \eqref{problemA} and show that $\vv\in \mathcal{C}^{1,\alpha}$ for some $\alpha$. Finally, our main goal will be to find a constant $A>1$ (typically sufficiently large) such that the solution of \eqref{problemA} satisfies
\begin{equation}
\|\cD\|_{\infty}\le A. \label{dreamA}
\end{equation}
Then evidently, $\vv$ is not only the solution to \eqref{problemA} but solves also the original problem \eqref{problem}. Consequently, we will get the existence of $\mathcal{C}^{1,\alpha}$ solution to the original problem.

The rest of this section is devoted to the construction of a solution to \eqref{problemA} and to the proof of  a~priori estimates that will be independent of the choice of parameter~$A$. Since the rigorous proof of the existence result for general dimension $d\ge 2$ was  established in \cite{BP,BPP} for Dirichlet boundary data, we mention here only the parts essential for our studies and skip the details.

To end this introductory part, we just recall basic properties of $\S^A$ which are direct consequences of assumptions \eqref{H1}--\eqref{H3} and also of the definition \eqref{c33}. Hence, for  all $c\in\R$ and any $\C,\B\in\R^{2\times2}$ we have (the interested reader can find the proof in  \cite{MNRR})
\begin{align}\label{c00}
\left|\frac{\pa \S_{ij}^A(c,\B)}{\pa \B_{kl}}\right| &\leq
C\tha^{p(c)-2}(\B),\\
\label{c11}
\frac{\pa \S_{ij}^A(c,\B)}{\pa \B_{kl}} \C_{ij}\C_{kl}&\geq \lambda \tha^{p(c)-2}(\B)|\C|^2,\\
\label{c22}
\left|\frac{\pa\S^A(c,\B)}{\pa c}\right| &\leq C \log(\tha(\B))\tha^{p(c)-2}(\B)|\B|,
\end{align}
where $\lambda$ and $C$ are positive constant that are independent of $A$.

\subsection{Galerkin approximation and the first a~priori estimates}

In this section we consider the problem \eqref{problemA} but for simplicity we avoid writing $\S^A$ and keep writing $\S$. Nevertheless, we derive estimates that will not depend on $A$. In case that some parts of the estimates are $A$-dependent, we clearly denote it in what follows. Also to simplify the notation, we use a symbol $C$ to denote a generic constant whose value can however change line to line.

Although the first part of the existence proof is almost identical to the procedure developed in \cite{BP,BPP} we recall the main steps here for the sake of clarity. First, we take  $\{ \w_i \}_{i=1}^{\infty}$ the basis of $W_{\Per, \Div}^{1, 2}$ composed of eigenfunctions of the Stokes operator such that $\iO \w_i\cdot\w_j\,dx=\delta_{ij}$. Due to the periodic boundary conditions, the basis consists of functions $\{ \w_i \}_{i=1}^{\infty}$ fulfilling for some positive  $\lambda_i$ (see also \cite{MNRR})
 \be\label{eigenfunctions}
-\Delta \w_i=\lambda_i \w_i \ \ \ \mbox{in} \ \O.
\ee
Similarly, we consider $\{ z_i \}_{i=1}^{\infty}$  a basis of $W_{\Per}^{1, 2}$ such that $\iO z_i z_j\,dx=\delta_{ij}$. Then for positive, fixed $n, m \in \N$, we look for a couple $(\vv^{n,m}, c^{n,m})$ given by
\be \label{def-Galerkin}
\vv^{n,m}:= \sum_{i=1}^n \alpha_i^{n,m}\w_i, \quad c^{n,m}:= \sum_{i=1}^m \beta_i^{n,m} z_i,
\ee
where $\boldsymbol\alpha^{n,m}$ and $\boldsymbol\beta^{n,m}$ solve the following system of algebraic equations (since $\f \in L^2$, we can replace the duality pairing appearing in \eqref{wf1} by the integral)
\begin{align}\label{approxsyst1}
&\iO -(\vv^{n,m}\otimes \vv^{n,m}) : \na \w_i\,dx +{\iO \Scvn : \D\w_i \,dx  =  \iO  \f \cdot \w_i \,dx}, \\
\intertext{for all  $i=1,\dots, n$,}
\label{approxsyst2}
&\iO \K(c^{n,m}, |\cD^{n,m}|)\na c^{n,m} \cdot \na z_j \,dx = \iO (c^{n,m} \vv^{n,m}+\g)  \cdot \na z_j \,dx
\end{align}
for all $j=1, \dots, m$.
The existence of a solution to \eqref{approxsyst1}--\eqref{approxsyst2} can be shown by the fixed point theorem and we refer the interested reader to \cite{BP} for details. In addition, following step by step \cite{BP} we can let $m\to \infty$ to obtain a solution $(\vv^n,c^n)\in (\mathcal{C}^{\infty}_{\Per},W^{1,2}_{\Per})$ to the following problem
\begin{align}\label{syst-n1}
&\iO  -(\vv^n\otimes \vv^n ) : \na \w_i\,dx+ {\iO \S^n : \D\w_i \,dx  =  \iO  \f \cdot \w_i \,dx}, &&\textrm{for all } i=1,\dots, n,\\
\label{syst-n2}
&\iO \K(c^{n}, |\cD^{n}|)\na c^{n} \cdot \na \varphi\,dx = \iO (c^n \vv^n+\g) \cdot \na \varphi \,dx,  &&\textrm{for all }\varphi \in W_{\Per}^{1,2}(\O),
\end{align}
where $\vv^n$ is given by
\be \label{def-Galerkinm}
\vv^{n}:= \sum_{i=1}^n \alpha_i^{n}\w_i.
\ee
 Here, we also  used the abbreviation  ${\bf S}^{n}:=\mathbf{S}(c^n, \cD^n)$. Next, we derive the first a~priori estimates. Multiplying the i-th equation in \eqref{syst-n1} by $\alpha_i^n$ and taking the sum over $i=1,\dots, n$, setting $\varphi:=c^n$ in \eqref{syst-n2}, and using integration by parts and the fact that $\Div \vv^n=0$, we get the following two identities
\be
\begin{split}\label{estap1}
\iO\S^A(c^n(x), \cD^n):\cD^n\,dx&= \iO  \f \cdot \w_i \,dx,\\
\iO \K(c^{n}, |\cD^{n}|)\na c^{n} \cdot \na c^n\,dx &= \iO \g \cdot \na c^n \,dx.
\end{split}
\ee
Hence, employing \eqref{c33} and the Sobolev embedding $W^{1,1}\hookrightarrow L^2$,  one gets from the first identity that there exist $c_1>0$ such that
$$
c_1\iO|\cD|^{p_-}\,dx-C\leq
\lambda\iO \tha^{p(c)-2}(\cD^n) |\cD^n|^2\,dx \leq \lambda\|\f\|_{2}\|\vv^n\|_{1,p^-}.
$$
Thus using the  Korn inequality and the assumptions on~$\f$, we observe
\be\label{DvL2}
\|\vv^n\|_{1,p^{-}}^{p^-}\leq C \iO \tha^{p(c)-2}(\cD^n) |\cD^n|^2+1\,dx \leq  C\left(\|\f\|_{2}^{(p^-)'}+1\right)\leq C.
\ee
Consequently, using the embedding theorem and the fact that $p^{-}>1$, we obtain that for some $\tilde{q}>2$ there holds
\be\label{DvL2vv}
\iO  |\vv^n|^{\tilde{q}}\, dx \leq  C.
\ee

Next, we focus on estimates for $c^n$ that follows from the second identity in \eqref{estap1}. First, using the assumption on $\K$, see~\eqref{concentration-flux-vector}, and the H\"{o}lder inequality, we deduce
$$
\iO |\na c^n|^2\,dx\leq C \iO |\g| |\nabla c^n|\, dx \le C\|\g\|_2 \|\na c^n\|_2
$$
and, from the Sobolev-Poincar\'e embedding, the Young inequality and the assumption on $\g$ it follows
\be \label{nabla-cn-2}
\|c^n\|_{1,2}^2\le C \iO |\na c^n|^2\,dx\leq  C\|\g\|_2^2<C.
\ee
We would like to emphasize at this place that  the estimates \eqref{DvL2} and \eqref{nabla-cn-2} are independent of $n\in\N$ and $A>1$, which will be used in what follows.

\subsection{Improvement of the integrability of $\na c^n$}
Through the paper, it is absolutely essential that the concentration will be H\"{o}lder continuous. Since we are interested in dimension two, we can obtain such H\"{o}lder continuity result by showing that $\nabla c^n \in L^q(\O)$ for some\footnote{This is a simple typically 2D alternative to the approach presented in \cite{BP}, where the H\"older continuity of $c^n$ is proved by the De Giorgi-Nash-Moser technique.} $q>2$. Such an improvement of the integrability  of the concentration gradient will be proven  by using the  reverse H\"older inequality.

To do so, we first denote
$$
\tilde{\bf g}:=c^n\vv^n +{\bf g}.
$$
Next, we define $q_0:=\min\{q, (\tilde{q}+2)/2\}$, where $\tilde{q}$ comes from \eqref{DvL2vv}. Then, by using the H\"{o}lder inequality, the embedding theorem and the a~priori estimate \eqref{nabla-cn-2}, we deduce
\begin{equation}\label{estg}
\|\tilde{\g}\|_{q_0} \le \|\g\|_q + \|c^n\vv^n\|_{\frac{\tilde{q}+2}{2}} \le C\left( \|\g\|_q + \|\vv^n\|_{\tilde{q}} \|c^n\|_{\frac{\tilde{q}(\tilde{q}+2)}{\tilde{q}-2}}\right)\le C.
\end{equation}

Next, for arbitrary $x_0\in \R^2$ and $R\in (0,1)$ we test the equation by $\varphi =(c^n-c_R^n)\eta^2$, where we denote the mean value over the ball $B(x_0, R)$ with the subscript $_R$. The function $\eta$ is a cut-off function, i.e., $\eta \in \mathcal{C}_0^\infty(B(x_0,R))$, $0\leq \eta\leq 1$, $\eta\equiv 1$ in $B(x_0, {R}/{2})$ and $|\na \eta| \leq {k}/{R}$, extended periodically with respect to  $\Omega$. Moreover we extend to $\mathbb{R}^2$ periodically with respect to $\O$ all functions and obtain that \eqref{syst-n2}  is fulfilled in $\R^2$.
Then employing $\varphi$ as test function we get that (using also the definition of $\tilde{g}$)
\be\label{gradient-cn}
\begin{split}
\int_{\R^2} \K(c^n, |\cD^n|)|\na c^n|^2 \eta^2 \,dx + \int_{\R^2} \K(c^n, |\cD^n|)\na c^n\cdot \na (\eta^2) (c^n-c_R^n)\,dx \\
= \int_{\R^2} \tilde{\g} \cdot \na (\eta^2)(c^n-c_R^n)\,dx +
\int_{\R^2} \tilde{\g}\cdot\na c^n\eta^2\,dx.
\end{split}
\ee
Employing the properties of $\eta$, the assumption \eqref{concentration-flux-vector} and the Young inequality, the identity \eqref{gradient-cn} becomes
\be\begin{split}\label{p:1}
\iRt |\na c^n|^2\,dx &\leq C \iR \frac{|c^n-c_R^n|^2}{R^2}\,dx + C\iR |\tilde{\g}|^2\, dx.
\end{split}\ee
Using the Sobolev-Poincar\'e inequality, we can estimate the first term on the right hand side as (here $\imR$ denotes the mean value integral)
$$
\iR\frac{|c^n-c^n_R|^2}{R^2}\, dx
\leq CR^2\left(\imR|\na c^n| \, dx \right)^{2}.
$$
Hence, substituting this inequality into \eqref{p:1} and dividing both sides by $R^2$, we obtain
 \be\label{propGiaquinta}
\imRt |\na c^n|^2\, dx  \leq C \left( \imR |\na c^n| \,dx\right)^2
+C \imR |\tilde{\g}|^2\,dx.
\ee
Finally, recalling \eqref{estg}, we have $\tilde{\g}\in L^{q_0}$ with $q_0>2$. Thus,  we can employ the reverse H\"{o}lder inequality, see in \cite{GM} or in \cite[Proposition 1.1 (on page 122)]{Gbook} and conclude that
\be\label{summability-nabla-c}
|\na c^n| \in L_{{\rm loc}}^{2+\delta}(\O)
\ee
for certain $\delta>0$ that depends only on $q_0$ and $C$ and we have the estimate
\be
\bigg(\imRt \! \!|\na c^n|^{2+\delta}\,dx\bigg)^{\frac{1}{2+\delta}} \! \! \!\leq C\bigg( \imR \! \!\!\!|\na c^n|^2\,dx\bigg)^{\frac{1}{2}}+ C\bigg( \imR \!\!\!\!|\tilde{\g}|^{2+\delta}\,dx\bigg)^{\frac{1}{2+\delta}}.
\ee
Consequently, it follows  from \eqref{DvL2} and \eqref{nabla-cn-2} and the embedding theorem that
\be \label{cl2d}
\|c^n\|_{\mathcal{C}^{0,\frac{\delta}{2+\delta}}(\R^2)}+\|\na c^n\|_{L^{2+\delta}(\Omega)} \leq  C
\ee
where $\delta>0$ and $C>0$ are constants independent of $A$ and $n$.

\subsection{Global uniform $W^{2,2}$ estimates for $\vv^n$}\label{SS2.3}

In this section we derive the main starting estimate that will be uniform with respect to $A$ and $n$ and will play the crucial role for deriving the H\"{o}lder continuity of the velocity gradient.  It is important since the quality of this estimate determines how strict are our assumption on $p^-$ and $p^+$. We still work with the approximation $\{\mathbf{v}^n, c^n\}$ fulfilling \eqref{syst-n1} and \eqref{syst-n2} and suppress dependence on $n$ and $A$. However, the constant $\lambda$ is fixed by \eqref{c11} and the constant $C$ may vary line to line but will not depend on $n$ or $A$, but can depend on data only.

Multiplying the $i$-th equation in \eqref{syst-n1} by $\lambda_i\alpha_i^{n}$ and taking the sum over $i=1,\dots,n$, which is nothing else than testing by $-\Delta \vv$  in virtue of \eqref{eigenfunctions}, and using the fact that in dimension two it holds (using integration by parts)
$$
\iO (\vv\otimes\vv) : \na \Delta \vv\,dx = \sum_{i,j,k=1}^2 \iO \partial_{x_k}v_j \partial_{x_j}v_i \partial_{x_k} v_i\,dx =0,$$ for $\vv\in C^\infty_{per}(\Omega)$ we get
\be
-\iO \S : \D(\Delta\vv) \,dx  =  -\iO \f \cdot\Delta \vv\, dx.
\ee
After integration by parts we arrive at
\be\label{sumFr}
\iO  \partial_{x_k} [\S (c(x), \cD) \big]: \partial_{x_k} (\cD) \,dx=-\iO \f \cdot\Delta \vv\, dx,
\ee
where we used the Einstein summation convention. First, we focus on the term on the left hand side.
We apply the derivative to the corresponding term and denote
$$
\begin{aligned}
&\iO  \partial_{x_k} [\S (c(x), \cD) \big]: \partial_{x_k} (\cD) \,dx\\
&\qquad= \iO (\partial_{c}\S :\partial_{x_k} (\cD))\partial_{x_k}c\,dx
+\iO \partial_D \S:(\partial_{x_k} (\cD)\otimes\partial_{x_k} (\cD)) \,dx
=:F_1+F_2.
\end{aligned}
$$
The good term $F_2$ can be estimated by \eqref{c11}
as
\be\label{F2}
F_2 \geq \lambda\iO \tha^{p(c)-2}(\cD) |\na \cD|^2\, dx.
\ee

So we focus on the worse term $F_1$. By virtue of  \eqref{c22}, it holds
$$
|F_1| \leq C\iO |\nabla c| \log(\tha(\cD))\tha^{p(c)-2}(\cD)|\cD||\na \cD|\,dx.
$$
Now we apply the  H\"older inequality with the exponent $(2+\delta)$, where $\delta$  appears in \eqref{cl2d}, the exponent $2$ and  exponents $\alpha>2$ and $\beta>2$, (which will be from now fixed, depending thus only on already fixed $\delta>0$)  such that
$$
1=\frac{1}{2+\delta}+\frac 1\alpha+\frac1\beta+\frac12
$$
to get
\be\label{F1}
|F_1| \leq C\|\nabla c\|_{2+\delta}\| \log(\tha(\cD))\|_{\alpha}\|\tha^{\frac{p(c)-2}2}(\cD)|\cD|\|_{\beta}\|\|\tha^{\frac{p(c)-2}2}(\cD)\na \cD|\|_2.
\ee
The first term can be simply estimated by \eqref{cl2d}. For the second term
we use the fact that $\log(s)\leq C s^{p^-/\alpha}$ for $s>1$ and due to the a~priori bound \eqref{DvL2}, we have  that
$$
\| \log(\tha(\cD))\|_{\alpha}\leq C\|1+\cD\|_{p^-}^{p^-/\alpha}\leq C.
$$
Thus, \eqref{F1} reduces to
\be\label{F111}
|F_1| \leq C\|\tha^{\frac{p(c)-2}2}(\cD)|\cD|\|_{\beta}\|\|\tha^{\frac{p(c)-2}2}(\cD)\na \cD|\|_2.
\ee
Finally, abbreviating $\eta:=\tha(\cD)^{(p(c)-2)/2}\cD$, we also have by using \eqref{DvL2} and that $p^-\leq2$
$$
\|\eta\|_{p^-}\leq C,\quad |\na\eta|\leq C\tha^{\frac{p(c)-2}2}(\cD)|\na \cD|+ C|\nabla c| \log (\tha (\cD))\tha^{\frac{p(c)-2}2}(\cD)|\cD|
$$
and, using again the H\"{o}lder inequality and the same procedure as above, we have
\be\label{eta-gradeta}
\|\eta\|_{1,2}\leq C\left(1+\|\tha^{\frac{p(c)-2}2}(\cD)|\na \cD|\|_2+ \|\eta\|_{\beta}\right).
\ee
Consequently, for given $\beta$ we can find  $\sigma\in(0,1)$ such that after using the interpolation theorem, we have with the help of \eqref{eta-gradeta} that
$$
\begin{aligned}
\|\eta\|_\beta\leq C(\beta)\|\eta\|_{p^{-}}^{\sigma}\|\eta\|_{1,2}^{1-\sigma} \le C\left(1 +\|\tha^{\frac{p(c)-2}2}(\cD)|\na \cD|\|^{1-\sigma}_2+ \|\eta\|^{1-\sigma}_{\beta}\right).
\end{aligned}
$$
Since $\sigma>0$ we can use the Young inequality to move the last term to the left hand side to obtain the final estimate
\be\label{eta_beta}
\begin{aligned}
\|\eta\|_\beta\leq C+ C\|\tha^{\frac{p(c)-2}2}(\cD)|\na \cD|\|^{1-\sigma}_2.
\end{aligned}
\ee
Combining \eqref{F111} and \eqref{eta_beta} and the Young inequality, we deduce
\be\label{F1-2}
\begin{aligned}
|F_1| &\leq C +C \|\tha^{\frac{p(c)-2}2}(\cD)\na \cD|\|_2^{2-\sigma} \le C +\frac{\lambda}{4}\|\tha^{\frac{p(c)-2}2}(\cD)\na \cD|\|^2_2.
\end{aligned}
\ee

It remains to estimate the term on the right hand side of \eqref{sumFr}. Using the H\"{o}lder and the Young inequalities, the assumption on $\f$ and the uniform estimate \eqref{DvL2}, we find
\begin{equation} \label{vloz}
\begin{aligned}
\left|\iO \f \cdot \Delta \vv\, dx \right| &\le C\iO |\f| \, \tha^{\frac{2-p(c)}{2}}(\cD) \,\tha^{\frac{p(c)-2}2}(\cD)|\na \cD|\, dx \\
&\le  \|\f\|_{2+2r} \|\tha^{\frac{p(c)}{2}\frac{2-p(c)}{p(c)}}(\cD)\|_{\frac{2(1+r)}{r}}\| \tha^{\frac{p(c)-2}2}(\cD)\na \cD|\|_2\\
&\le C\|\tha^{\frac{p(c)}{2}}(\cD)\|_{\frac{2(1+r)}{r}\frac{2-p^-}{p^-}}^{\frac{2(2-p^-)}{p^-}} + \frac{c}{4}\| \tha^{\frac{p(c)-2}2}(\cD)\na \cD|\|_2^2.
\end{aligned}
\end{equation}
Hence, inserting estimates \eqref{vloz}, \eqref{F1-2} and \eqref{F2} into \eqref{sumFr}, we obtain
\begin{equation}\label{est:w12H}
\|\tha^{\frac{p(c)-2}2}(\cD)\na \cD\|_2\leq C\left(1+\|\tha^{\frac{p(c)}{2}}(\cD)\|_{\frac{2(1+r)}{r}\frac{2-p^-}{p^-}}^{\frac{2-p^-}{p^-}}\right)
\end{equation}
with the constant $C$ independent of $A$ (and also of $n$). In addition, it also follows from \eqref{eta-gradeta}, \eqref{eta_beta} and the definition of $\eta$ that
\begin{equation} \label{n12H}
\|\tha^{(p(c)-2)/2}(\cD)\cD\|_{1,2} \le C\left(1+\|\tha^{\frac{p(c)}{2}}(\cD)\|_{\frac{2(1+r)}{r}\frac{2-p^-}{p^-}}^{\frac{2-p^-}{p^-}}\right).
\end{equation}
Consequently, we can use the embedding theorem to obtain that for any $\beta\in (1,\infty)$ there holds
\begin{equation}\label{beta-normA}
\|\tha^{\frac{p(c)}{2}}(\cD)\|_{\beta}\le \|\tha^{(p(c)-2)/2}(\cD)\cD\|_{\beta}\le C(\beta)\|\tha^{(p(c)-2)/2}(\cD)\cD\|_{1,2}.
\end{equation}
Thus, setting $\beta:=\frac{2(1+r)}{r}\frac{2-p^-}{p^-}$ in \eqref{beta-normA}, inserting this inequality into \eqref{n12H}, using the fact that $(2-p^{-})/p^- <1$ and the Young inequality, we deduce that
\begin{equation} \label{n12}
\|\tha^{(p(c)-2)/2}(\cD)\cD\|_{1,2} \le C.
\end{equation}
Then, it directly follows from \eqref{n12H} and \eqref{beta-normA} that
\begin{equation}\label{est:w12}
\|\tha^{\frac{p(c)-2}2}(\cD)\na \cD\|_2\leq C
\end{equation}
and
\begin{equation}\label{beta-norm}
\|\tha^{\frac{p(c)}{2}}(\cD)\|_{\beta}\le \|\tha^{(p(c)-2)/2}(\cD)\cD\|_{\beta}\le C(\beta)
\end{equation}
with the constant $C$ independent of $n$ and $A$. In addition, it also follows from \eqref{beta-norm}, the definition of $\tha$ and the fact that $p^->1$ that \begin{equation}\label{beta-norm2}
\|\cD\|_{\overline{\beta}}\le C(\overline{\beta}), \qquad \|\tha^{\frac{p}{2}}(\cD)\|_{1,2}\le C,
\end{equation}
for any $\overline{\beta}\in (1,\infty)$.

\subsection{Limit $n\to \infty$}
Thanks to the regularity estimate \eqref{beta-norm2} and recalling also \eqref{cl2d}, we can use the compact embedding and the monotone operator theory to let $n\to \infty$ in \eqref{syst-n1}--\eqref{syst-n2} to deduce the existence of a couple $(c^A,\vv^A)$ fulfilling
\begin{align}
  \label{syst-w1}
&\iO-(\vv^A\otimes \vv^A) : \na \w + \S^A(\D\vv^A) : \D\w \,dx  =  \iO\f \cdot \w \, dx  &&\textrm{for all }\w\in W^{1,2}_{\Per, \Div},\\
\label{syst-w2}
&\iO \K(c^A, |\cD^A|)\na c^{A} \cdot \na \varphi\,dx = \iO (c^A \vv^A+\g) \cdot \na \varphi \,dx &&\textrm{for all } \varphi \in W_{per}^{1,2}(\O).
\end{align}
Moreover, thanks to uniform estimates \eqref{cl2d} and \eqref{n12}, and due to the definition of $\theta_A (\cD)$, we know that
\begin{equation}\label{reg:nonunif}
  \vv^A\in W^{2,2}_{loc}(\mathbb{R}^d; \mathbb{R}^d),\quad c^A\in \mathcal{C}^{0,\frac \delta{2+\delta}}(\R^2)\cap W^{1,2+\delta}_{\Per}
\end{equation}
for a suitable $\delta>0$, and they fulfill the estimates \eqref{cl2d}, \eqref{est:w12} uniformly with respect to $A>1$.

\section{Proof of the main theorem}
This section is devoted to the proof of the main theorem. First, we recall and improve some standard results in the regularity theory. Then we localize the estimate arising in Subsection~\ref{SS2.3}. Next, we use the hole filling technique to show the sharp $\mathcal{C}^{\alpha}$ estimates and we also trace the precise dependence on the parameter $A$. Finally, we combine such result with the uniform bound \eqref{n12} to show an estimate which is independent of $A$.

To end this introductory part, we recall some standard notation that will be used for localization. For any $x\in \R^2$ and $R>0$ the symbol $B_{R}(x)$ denotes the ball centered at~$x$ and radius~$R$. Similarly, we denote the annulus $A_R(x):=B_{2R}(x)\setminus B_{R}(x)$. Often, in case it is clear from context, we will omit writing the center $x$. Further, we introduce a notation for a function with zero average. For a function $f$ defined on a measurable set $U$ we define
$$
\meanless{f}{U}:=f-(f)_{U},
$$
where
$$
(f)_U:=\fint_U f\, dx = \frac{1}{|U|}\int_U f\, dx.
$$
For $U=A_R$ we shorten the notation to $\meanless fR$.

\subsection{Auxiliary estimate}
We recall here the standard hole filling lemma (see \cite{W}), where we however sharply trace the dependence on all constants.
\begin{lem}[Hole Filling Lemma]\label{HoleFillLem}
Let $g\in L^1_{loc}(\mathbb{R}^2)$ and $\alpha, \beta$ and  $\nu$ be positive constants. Assume that for all $0<R\le R_0\le 1$ the following inequality holds true
\be\label{hyp-lem}
 \int_{B_{R}} |g|\,dx \leq \alpha \iRR  |g|\,dx + \beta R^\nu.
\ee
If we define
\be \label{dfmu}
\mu:=\min \left\{\frac{\nu}{2}, \log_2 \left(\frac{1+\alpha}{\alpha}\right)\right\}
\ee
then for all $R\in (0,R_0)$ there holds
\be\label{hole-result}
 \int_{B_{R}} |g|\,dx \leq R^{\mu} \left( 2^{\nu}\int_{B_{R_0}} \frac{|g|}{R_0^{\mu}}\,dx+ \frac{\beta}{2^{\frac{\nu}{2}}-1} \right).
\ee
\end{lem}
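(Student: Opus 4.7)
\medskip

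\textbf{Proof plan.} The starting observation is that the annulus decomposition $B_{2R}=B_R\cup A_R$ converts the hypothesis into a self-improving inequality on a single monotone quantity. Writing $\phi(R):=\int_{B_R}|g|\,dx$ and using $\int_{A_R}|g|=\phi(2R)-\phi(R)$, the assumption \eqref{hyp-lem} becomes $(1+\alpha)\phi(R)\le \alpha\phi(2R)+\beta R^\nu$, i.e.
\[
\phi(R)\le \gamma\,\phi(2R)+\tfrac{\beta}{1+\alpha}R^\nu,\qquad \gamma:=\tfrac{\alpha}{1+\alpha}\in(0,1).
\]
Equivalently, after replacing $R$ by $R/2$,
\[
\phi(R/2)\le \gamma\,\phi(R)+\tfrac{\beta}{(1+\alpha)2^\nu}R^\nu.
\]
This is exactly the kind of recursion one iterates along the dyadic scales $R_k:=R_0 2^{-k}$.

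Next I would iterate, but rescale first: set $a_k:=\phi(R_k)/R_k^{\mu}$. Plugging the recursion into the definition of $a_{k+1}$ yields
\[
a_{k+1}\le (\gamma\,2^{\mu})\,a_k+\tfrac{\beta\,2^\mu}{(1+\alpha)2^\nu}\,R_0^{\,\nu-\mu}\,2^{-k(\nu-\mu)}.
\]
The choice \eqref{dfmu} is engineered so that both $\gamma\,2^\mu\le 1$ (from $\mu\le\log_2((1+\alpha)/\alpha)$) and $\nu-\mu\ge\nu/2>0$ (from $\mu\le\nu/2$); therefore the multiplier in front of $a_k$ is at most $1$ and the inhomogeneous term is summable in a geometric series with ratio $2^{-(\nu-\mu)}\le 2^{-\nu/2}$. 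Summing from $0$ to $k-1$ gives
\[
a_k\le a_0+\tfrac{\beta\,2^\mu}{(1+\alpha)2^\nu}\,R_0^{\,\nu-\mu}\,\tfrac{1}{1-2^{-(\nu-\mu)}}\le a_0+\tfrac{\beta}{(1+\alpha)(2^{\nu/2}-1)},
\]
after using $R_0\le 1$, $2^\mu\le 2^{\nu/2}$, and $1-2^{-(\nu-\mu)}\ge 1-2^{-\nu/2}$. Multiplying by $R_k^{\mu}$ transforms this into the claimed estimate along the dyadic sequence.

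To pass to an arbitrary $R\in(0,R_0)$, I would use the monotonicity $\phi(R)\le\phi(R_k)$ for any $R_k\ge R$: pick $k\ge0$ with $R_0 2^{-k-1}<R\le R_0 2^{-k}$, so $R_k\le 2R$ and hence $R_k^\mu\le 2^\mu R^\mu\le 2^\nu R^\mu$ (the last inequality from $\mu\le\nu/2\le\nu$). Combining this with the above yields
\[
\phi(R)\le R^\mu\!\left(2^\nu\!\int_{B_{R_0}}\!\!\tfrac{|g|}{R_0^\mu}\,dx+\tfrac{\beta}{2^{\nu/2}-1}\right),
\]
which is exactly \eqref{hole-result} (using $1+\alpha>1$ to simplify the second constant).

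The only mildly delicate point is matching the specific constants in \eqref{hole-result}; the proof is otherwise a textbook dyadic iteration. The role of the defining formula \eqref{dfmu} for $\mu$ is precisely to make the iteration stable ($\gamma 2^\mu\le 1$) while keeping the residual geometric series $\sum 2^{-k(\nu-\mu)}$ summable, and this is the step that has to be done carefully.
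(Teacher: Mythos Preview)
Your approach is essentially the paper's: both rewrite \eqref{hyp-lem} as $\phi(R)\le\gamma\,\phi(2R)+\tfrac{\beta}{1+\alpha}R^\nu$ with $\gamma=\alpha/(1+\alpha)$ and iterate dyadically, using the definition \eqref{dfmu} of $\mu$ to make the recursion stable and the residual series geometric. The paper packages the iteration slightly differently---it sets $\eta(R):=R^{-\mu}\phi(R)+\tfrac{\beta}{(1+\alpha)(2^{\nu/2}-1)}R^{\nu-\mu}$ and proves $\eta(R)\le\eta(2R)$ in one step, so the passage to an arbitrary $R\in(0,R_0)$ is immediate and no extra factor lands on the $\beta$-term; in your final step the bound $R_k^\mu\le 2^\mu R^\mu$ multiplies \emph{both} terms, and since $2^\mu/(1+\alpha)\le 1$ can fail for small $\alpha$, you do not quite recover the exact constant $\beta/(2^{\nu/2}-1)$ in \eqref{hole-result} (for the applications in this paper the slip is harmless).
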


\begin{proof}
We add  $\alpha \displaystyle \int_{B_{R}} |g|\,dx$ to both sides of \eqref{hyp-lem} and after division the result by $(1+\alpha)$,  we get
\be\label{iter0}
 \int_{B_{R}} |g|\,dx \leq \frac{\alpha}{1+\alpha} \int_{B_{2R}} |g|\,dx+ \frac{\beta}{1+\alpha} R^\nu.
\ee
Next, we add to both sides $\varepsilon^{-1} R^{\nu}\beta/(\alpha+1)$ with some $\varepsilon>0$, that will be specified later, and divide the result by $R^{\mu}$ to obtain
\be\label{iter1}
\begin{split}
 \int_{B_{R}} \frac{|g|}{R^{\mu}} \,dx+ \frac{\beta \varepsilon^{-1}}{1+\alpha} R^{\nu-\mu} &\leq \frac{\alpha}{1+\alpha} \int_{B_{2R}} \frac{|g|}{R^{\mu}}\,dx+ \frac{(1+\varepsilon^{-1})\beta}{1+\alpha} R^{\nu-\mu}\\
 &=\frac{\alpha 2^{\mu}}{1+\alpha} \int_{B_{2R}} \frac{|g|}{(2R)^{\mu}}\,dx+ 2^{\mu-\nu}(\varepsilon+1)\frac{\varepsilon^{-1}\beta}{1+\alpha} (2R)^{\nu-\mu}.
\end{split}
\ee
Hence, denoting
$$
\eta(R):=\int_{B_{R}} \frac{|g|}{R^{\mu}}\,dx+ \frac{\varepsilon^{-1}\beta}{1+\alpha} R^{\nu-\mu},
$$
we obtain from \eqref{iter1} that
\be \label{iter4}
\eta(R)\leq \eta(2R) \max\left\{\frac{\alpha 2^{\mu}}{1+\alpha},2^{\mu-\nu}(\varepsilon+1)\right\}.
\ee
Finally, setting $\varepsilon:= 2^{\frac{\nu}{2}}-1>0$, we can use the definition of $\mu$, see \eqref{dfmu}, to deduce that
$$
\max\left\{\frac{\alpha 2^{\mu}}{1+\alpha},2^{\mu-\nu}(\varepsilon+1)\right\}\le 1.
$$
Consequently, \eqref{iter4} reduces to
\be \label{iter5}
\eta(R)\leq \eta(2R).
\ee
Thus, for any $R>0$ we can find $m\in \mathbb{N}$ such that $2^mR \in (R_0/2 ,R_0)$ and iterating \eqref{iter5}, we see that (using also the fact that $\mu\le \nu$ and $R_0\le 1$)
\be
\begin{split}\label{iter6}
\eta(R)\le \eta(2^mR)&=\int_{B_{2^mR}} \frac{|g|}{(2^m R)^{\mu}}\,dx+ \frac{\varepsilon^{-1}\beta}{1+\alpha} (2^mR)^{\nu-\mu}\\
&\le 2^{\nu}\int_{B_{R_0}} \frac{|g|}{(R_0)^{\mu}}\,dx+ \frac{\beta}{2^{\frac{\nu}{2}}-1}.
\end{split}
\ee
Using the definition of $\eta$, we see that \eqref{hole-result} easily follows.
\end{proof}

\subsection{Local estimates of second gradient.}
In this section we closely follow  the procedure developed in Section~\ref{SS2.3} but we will focus on localized estimate.
\begin{lem}
Let $\vv$ be the solution determined in \eqref{syst-w1}-\eqref{reg:nonunif}, then  for any $x_0\in \R^2$ and any $R\in (0,1)$ it holds
\be \label{hole-start}
\int_{B_{R}(x_0)}\!\!\!\!\!\!\!\!\!\!\theta_A^{p(c)-2}(\cD) \left| \na \cD\right|^2 \,dx \leq CR^{\nu}+C\int_{A_R(x_0)}\!\!\!\!\!\!\!\!\!\!\tha^{p(c)-2}(\cD) \frac{\left|\meanless{\na \vv}{R} \right|^2}{R^2}\,dx,
\ee
where the positive constants $C$ and $\nu$ are independent of $A$.
\end{lem}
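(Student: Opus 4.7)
The plan is to localize the argument from Subsection~\ref{SS2.3} using a standard cut-off. Fix $x_0\in\R^2$ and $R\in(0,1)$ and choose $\eta\in\mathcal{C}^{\infty}_0(B_{2R}(x_0))$ with $\eta\equiv1$ on $B_R(x_0)$ and $|\nabla\eta|\le C/R$. By \eqref{reg:nonunif} the solution $\vv^A$ belongs to $W^{2,2}_{\mathrm{loc}}$, so \eqref{syst-w1} may be differentiated in $x_k$ in the distributional sense. I would then apply the differentiated momentum equation to a test function built from $\eta^2\meanless{\pa_{x_k}\vv}{R}$, summed over $k$. Since $\pa_{x_k}\vv$ is solenoidal but $\eta^2\pa_{x_k}\vv$ is not, the associated pressure contribution is handled either by adding a Bogovski\u{\i} correction supported in $B_{2R}$ to restore divergence-freeness, or by recovering $\pi$ from \eqref{syst-w1} and absorbing the resulting $\pi$-commutator into the annular right-hand side.

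After integration by parts the principal left-hand term
\[
\iO \eta^2\,\pa_{x_k}[\S^A(c,\cD)]:\pa_{x_k}(\cD)\,dx = F_1+F_2
\]
splits as in Subsection~\ref{SS2.3}. By \eqref{c11}, $F_2\ge\lambda\iO\eta^2\tha^{p(c)-2}(\cD)|\na\cD|^2\,dx$, which is the good term. The bad term $F_1$ is estimated line by line as in \eqref{F1}--\eqref{F1-2}: the pointwise bound \eqref{c22}, H\"older with exponents $2+\delta,\alpha,\beta,2$, the improved gradient bound \eqref{cl2d} for $c$, and the uniform bounds \eqref{n12} and \eqref{beta-norm} on $\tha^{p(c)/2}(\cD)$ in every $L^{\bar\beta}$. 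Since the integration is over the small ball $B_{2R}$, H\"older produces an extra factor $R^\nu$ for some $\nu>0$, giving $|F_1|\le CR^\nu+\tfrac{\lambda}{8}\iO\eta^2\tha^{p(c)-2}(\cD)|\na\cD|^2\,dx$.

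The right-hand side decomposes into three local contributions. The first is the cut-off commutator $\int 2\eta\na\eta\cdot\pa_{x_k}\S^A\,\meanless{\pa_{x_k}\vv}{R}\,dx$, which is supported on $A_R$ and, by \eqref{c00}, Cauchy--Schwarz, and Young, produces precisely the target annular term $C\int_{A_R}\tha^{p(c)-2}(\cD)R^{-2}|\meanless{\na\vv}{R}|^2\,dx$ plus an absorbable fraction of the good term. The second is the force contribution, handled as in \eqref{vloz} using $\f\in L^{2+2r}$ together with the uniform bound \eqref{beta-norm}, and contributing $CR^\nu$. The third is the convective contribution $\int(\vv\otimes\vv):\na[\eta^2\meanless{\pa_{x_k}\vv}{R}]\,dx$; after one integration by parts and use of $\Div\vv=0$ this splits into a piece carried by $\na\eta$ (annular, controlled via $\vv\in L^{\bar\beta}$ for every $\bar\beta<\infty$ from \eqref{beta-norm2}) and a piece carried by $\eta^2\pa_{x_k}\cD$, absorbable into the good term by Young's inequality.

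The step I expect to be the main obstacle is the convective term: in the global estimate of Subsection~\ref{SS2.3} the two-dimensional identity $\iO(\vv\otimes\vv):\na\Delta\vv\,dx=0$ removed it entirely, but the cut-off destroys that cancellation, so one must carefully balance the power of $R$ obtained by H\"older on $B_{2R}$ against the uniform $L^{\bar\beta}$-bounds of \eqref{beta-norm2} while keeping all constants independent of $A$. Once this is done and the small fractions of $\iO\eta^2\tha^{p(c)-2}(\cD)|\na\cD|^2\,dx$ are absorbed into the left-hand side, restricting to $B_R(x_0)$ where $\eta\equiv1$ yields \eqref{hole-start}.
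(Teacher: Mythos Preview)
Your overall strategy---localize the computation of Subsection~\ref{SS2.3}, split into the good term $F_2$ and the remainder terms, and estimate the latter using the uniform bounds \eqref{cl2d}, \eqref{est:w12}--\eqref{beta-norm2}---is exactly what the paper does, and your treatment of $F_1$, the force term, and the convective term is essentially correct. Contrary to your expectation, the convective term is \emph{not} the main obstacle: the paper dispatches it routinely via the $L^{\bar\beta}$ bounds \eqref{beta-norm2}, obtaining a contribution of order $R$.

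The genuine difference, and a real gap in your proposal, is the handling of the divergence-free constraint. The paper sidesteps the pressure entirely by choosing the test function
\[
\w:=-\diver\bigl(\meanless{\nabla\vv-\nabla\vv^{T}}{R}\,\tau_R^2\bigr),
\]
which is automatically solenoidal because $\nabla\vv-\nabla\vv^{T}$ is antisymmetric (so $\pa_i\pa_j$ of any multiple of it vanishes by symmetry). This antisymmetrization is the key trick: it produces the same principal term $\int\tau_R^2\,\pa_{x_k}\S^A:\pa_{x_k}(\cD)\,dx$ while generating only commutators carried by $\nabla\tau_R$ on $A_R$, which by \eqref{c00}--\eqref{c22}, Cauchy--Schwarz and Young give precisely the weighted annular term $C\int_{A_R}\tha^{p(c)-2}(\cD)R^{-2}|\meanless{\nabla\vv}{R}|^2\,dx$.

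Your two proposed alternatives are not as innocuous as you suggest. With a Bogovski\u{\i} correction $\psi$ supported in $A_R$, the extra elliptic term $\int_{A_R}\pa_{x_k}\S^A:\nabla\psi$ involves $\tha^{p(c)-2}|\nabla\cD|\,|\nabla\psi|$, and after Young you need $\int_{A_R}\tha^{p(c)-2}|\nabla\psi|^2\le C\int_{A_R}\tha^{p(c)-2}R^{-2}|\meanless{\nabla\vv}{R}|^2$ with $C$ independent of $A$; since Bogovski\u{\i} gives only unweighted $L^q$ estimates, this weighted bound is not available without further argument. Likewise, recovering $\pi$ and estimating the commutator leads to $R^{-2}\int_{A_R}|\pi|^2\tha^{2-p(c)}$, and when $p(c)<2$ the factor $\tha^{2-p(c)}$ can be as large as $A^{2-p^-}$. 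In either case you would be forced into the delicate weight-balancing arguments that the paper postpones to Lemmas~\ref{hole1}--\ref{LemmaEta}, whereas the antisymmetric choice of test function delivers \eqref{hole-start} directly with $A$-independent constants.
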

\begin{proof}
We omit writing $x_0$ in what follows. We also proceed here formally assuming that all test functions are smooth\footnote{Such a procedure can be however for our problem justified rigorously} enough.  Let $\tau_R \in \mathcal{C}^1_0(B_{2R})$ be a cut-off function such that
$\tau_R=1$ on $B_R$ and $|\nabla\tau_R|\leq \frac{C}{R}$ on $B_{2R}\setminus B_R$ and consider $\w:=- \diver (\meanless{\nabla \vv-\nabla \vv^T}{R} \tau_R^2)$ as a test function in \eqref{syst-w1}. Note that $\w$ is divergence free, but just belongs to $L^2$ so we put the derivative on $\S$ and $\vv\otimes\vv$. Next, we repeat almost step by step the computations in Section~\ref{SS2.3} with the necessary changes due to the localization. Hence, we evaluate and estimate all terms arising by using $\w$ as a test function and for these estimates we repeatedly use \eqref{c00}--\eqref{c22}, integration by parts, the H\"older, the Poincar\'{e} and the Young inequalities. First, for the convective term, we have (using integration by parts twice and the Young inequality)
\begin{align*}
&\left|\int_{\R^2}(\vv \otimes \vv):\nabla \w\, dx\right| = \left| \int_{\R^2}  \na \vv : (\vv \otimes  \w)\,dx\right|\\
&=\left|\int_{\R^2}\na \vv : (\vv \otimes\diver (\meanless{\nabla \vv-\nabla \vv^T}{R} \tau_R^2))\, dx \right|\\
&\leq  \int_{B_{2R}} (|\nabla^2\vv||\vv| + |\nabla \vv|^2) |(\meanless{\nabla \vv-\nabla \vv^T}{R}|\tau_R^2\,dx\\
& \leq \frac{\lambda}{4} \int_{\R^2} \tha^{p(c)-2}(\cD)|\nabla \cD|^2\tau_R^2\, dx+ C\int_{B_{2R}}|\vv|^2\theta_A^{2-p(c)}|\meanless{\nabla \vv-\nabla \vv^T}{R}|^2 + |\nabla \vv|^3\, dx   \\
& \leq \frac{\lambda}{4} \int_{\R^2} \tha^{p(c)-2}(\cD)|\nabla \cD|^2\tau_R^2\, dx + C\int_{B_{2R}}|\vv|^6\theta_A^{3(2-p(c))}+ |\nabla \vv|^3\, dx.
\end{align*}
Finally, for the second integral, we can use estimates \eqref{beta-norm}--\eqref{beta-norm2}, the embedding theorem  and the Korn inequality to obtain
\begin{align*}
&\left|\int_{\R^2}(\vv \otimes \vv):\nabla \w\, dx\right| \leq \frac{\lambda}{4} \int_{\R^2}\!\!\! \tha(\cD)^{p(c)-2}|\nabla \cD|^2\tau_R^2\, dx+CR \| |\vv|^6\theta_A^{3(2-p(c))}+|\nabla \vv|^3\|_2 \\
&\leq  \frac{\lambda}{4} \int_{\R^2} \tha(\cD)^{p(c)-2}|\nabla \cD|^2\tau_R^2\, dx+CR
\end{align*}
For the term with $\f$, we again use the H\"{o}lder, the Poincar\'{e} and the Young inequality, and recalling that due to the assumptions we know that $\f\in L^{2(1+r)}(\Omega; \R^2)$ and the uniform estimates~\eqref{est:w12} and \eqref{beta-norm2}, we have
\begin{align*}
&\left| \int_{\R^2} \f \cdot \w \,dx \right| \leq \int_{\R^2} |\f||\Delta \vv|\tau^2_R\, dx  + C\iRR |\f| \frac{\left|\meanless{\na \vv - \na \vv^T}{R} \right|}{R}\,dx\\
&\leq C\int_{B_{2R}} |\f|\tha^{\frac{2-p(c)}{2}} \tha^{\frac{p(c)-2}{2}} |\nabla \cD|\, dx  \\
&\qquad + C\left(\iRR |\f|^{2+2r}\, dx\right)^{\frac{1}{2+2r}}\left(\iRR |\nabla \cD|^{\frac{2+2r}{1+2r}}\, dx  \right)^{\frac{1+2r}{2+2r}}\\
&\leq C\|\f\|_{2+2r} \|\tha^{\frac{p(c)-2}{2}} \nabla \cD\|_2 \left(\int_{B_{2R}} \tha^{\frac{2-p(c)}{2}\frac{2(1+r)}{r}} \, dx\right)^{\frac{r}{2(1+r)}} \\
 &\qquad + C\|\f\|_{2+2r}\left(\iRR \!\!\!\tha^{(2-p(c))\frac{1+r}{1+2r}}(\tha^{p(c)-2}|\nabla \cD|^2)^{\frac{1+r}{1+2r}}\, dx  \right)^{\frac{1+2r}{2+2r}}\\
 &\leq C\left(\int_{B_{2R}} \tha^{\frac{(2-p(c))(1+r)}{r}} \, dx\right)^{\frac{r}{2(1+r)}}\le \|\tha^{\frac{(2-p(c))(1+r)}{r}}\|_2^{\frac{r}{2(1+r)}}R^{\frac{r}{2(1+r)}}\le CR^{\frac{r}{2(1+r)}}.
\end{align*}
The last term we need to estimate is the one with $\S$. We use the inequalities \eqref{c00}--\eqref{c22}, integration by parts, the H\"{o}lder inequality, the Poincar\'{e} inequality  and the Young inequality and also the uniform estimates \eqref{cl2d} and \eqref{est:w12}--\eqref{beta-norm2} to obtain (we proceed here without details since the very similar procedure was already used in Section~\ref{SS2.3})
\begin{align*}
-\int_{\R^2} &\diver \S \cdot \w \,dx=\int_{\R^2} \diver \S \cdot  \diver (\meanless{\nabla \vv-\nabla \vv^T}{R}\tau^2_R)  \,dx\\
&=\int_{\R^2} \diver \S \cdot  \diver (\meanless{\nabla \vv}{R}\tau_R^2) -(\diver \S \otimes \nabla \tau^2_R) :  (\meanless{\nabla \vv^T}{R})  \,dx \\
&\ge \int_{\R^2} \nabla \S :  \nabla^2 \vv \tau_R^2\, dx -R^{-1}C\int_{A_R}|\nabla \S| |\meanless{\nabla \vv}{R}|\tau_R  \,dx \\
&\ge c\int_{\R^2} \tha^{p(c)-2} \left| \na \cD \right|^2 \tau_R^2\,dx- C\int_{\R^2} \log(\tha(\cD))\tha^{p(c)-2}(\cD)|\cD||\nabla \cD||\nabla c|\tau_R^2\, dx\\ &\qquad -CR^{-1}\int_{A_R} \tha^{\frac{p(c)-2}{2}} |\nabla \cD|\tau_R  \tha^{\frac{p(c)-2}{2}}\left|\meanless{\na \vv }{R} \right|\,dx  \\ 	
&\qquad -C R^{-1}\int_{A_R} \log(\tha(\cD))\tha^{p(c)-2}(\cD)|\cD||\meanless{\nabla \vv}{R}||\nabla c|\, dx \\
&\ge \frac{3c}{4}\int_{\R^2} \tha^{p(c)-2} \left| \na \cD \right|^2 \tau_R^2\,dx- C\int_{B_{2R}} \log^2(\tha(\cD))\tha^{p(c)-2}(\cD)|\cD|^2|\nabla c|^2\, dx\\ &\qquad -CR^{-2}\int_{A_R} \tha^{p(c)-2} \left|\meanless{\na \vv }{R} \right|^2\,dx  \\ 	
&\qquad -C R^{-1}\int_{B_{2R}} \log(\tha(\cD))\tha^{p(c)-2}(\cD)|\cD||\meanless{\nabla \vv}{R}||\nabla c|\, dx.
\end{align*}
Next, we can estimate the second and the last integral as follows (using \eqref{cl2d} and \eqref{beta-norm2})
$$
\begin{aligned}
\int_{B_{2R}} &\log^2(\tha(\cD))\tha^{p(c)-2}(\cD)|\cD|^2|\nabla c|^2\, dx\\
 &\le C \|\nabla c\|_{L^{2+\delta}(B_{2R})}^2 \|(1+|\cD|)^{p^++1}\|_{L^{\frac{2+\delta}{\delta}}(B_{2R})}\\
&\le \|\nabla c\|_{L^{2+\delta}(B_{2R})}^2 \|(1+|\cD|)^{p^++1}\|_{L^{\frac{2(2+\delta)}{\delta}}(B_{2R})}R^{\frac{\delta}{\delta+2}}\le CR^{\frac{\delta}{\delta+2}}.
\end{aligned}
$$
The last integral is estimated similarly (using Korn inequality)
$$
\begin{aligned}
R^{-1}&\int_{B_{2R}} \log(\tha(\cD))\tha^{p(c)-2}(\cD)|\cD||\meanless{\nabla \vv}{R}||\nabla c|\, dx \\
&\le CR^{-1}\|\nabla c\|_{L^{2+\delta}(B_{2R})} \|(1+|\cD|)^{p^+ +1}\|_{L^{\frac{2+\delta}{1+\delta}}(B_{2R})}\\
&\le CR^{-1}\|\nabla c\|_{L^{2+\delta}(B_{2R})} \|(1+|\cD|)^{p^+ +1}\|_{L^{\frac{4(2+\delta)}{\delta}}(B_{2R})}R^{\frac{3\delta+4}{2(2+\delta)}}\le C R^{\frac{\delta}{2(2+\delta)}}.
\end{aligned}
$$
Thus, defining
$$
\nu:=\min \left\{\frac{\delta}{2(2+\delta)}, \frac{r}{2(1+r)}\right\}
$$
and summarizing all above inequalities, we finally deduce \eqref{hole-start}.
\end{proof}

%

\subsection{Covering by proper balls}\label{covering}
We use the uniform estimate \eqref{cl2d}, which is independent of $A$,  to specify a proper covering of $\Omega$. We take arbitrary positive $\varepsilon_0 \le 1/10$ and find $R_0\le 1$ such that for all $x,y\in \R^2$ fulfilling $|x-y|\le 8R_0$, we have $|p(x)-p(y)|\le \varepsilon_0$. Then we can find a finite number of points $\{x_i\}_{i=1}^N$ such that $\cup_{i=1}^N B_{R_0/2}(x_i)$ is covering of $\Omega$. In addition, we define
$$
\begin{aligned}
p_i^{-}:= \inf_{x\in B_{8R_0}(x_i)} p(c(x)), \qquad p_i^{+}:= \sup_{x\in B_{8R_0}(x_i)} p(c(x)),\\
p_i^{-}(R):= \inf_{x\in B_{8R}(x_i)} p(c(x)), \qquad p_i^{+}(R):= \sup_{x\in B_{8R}(x_i)} p(c(x)).
\end{aligned}
$$
Finally, we also introduce constants related to balls $B_{8R_0}(x_i)$, which give the upper and the lower estimate for $\tha$
\be \label{cA}
\begin{split}
  c_0^i(A)&:= (1+|A|^2)^{\frac{\max\{2,p^+_i\}-2}2},\\
  c_1^i(A)&:=(1+|A|^2)^{\frac{\min\{2,p^-_i\}-2}2}.
\end{split}
\ee
Indeed, it directly follows from the definition \eqref{tha} that
\be \label{cAA}
\begin{aligned}
  c_0^i(A)&\ge \lambda\tha (\cD(x))  &&\textrm{for all } x\in B_{8R_0}(x_i),\\
  c_1^i(A)& \le C\tha (\cD(x)) &&\textrm{for all } x\in B_{8R_0}(x_i).
\end{aligned}
\ee

\subsection{The hole-filling inequalities}
In this subsection, we derive the hole-filling inequality that follows from the estimate \eqref{hole-start}. However, we split the estimate into two parts. The first one deals with the case when $p(c(x))$ is sufficiently large and the second one for the opposite case. In the following, we keep the notation from the covering introduced in the preceding section. Thus, the first result for large $p$ is the following.
\begin{lem}[Hole-filling inequality I] \label{hole1}
There exists a uniform constant $C$, which is independent of $A$ and $\varepsilon_0$ such that for any $x_i$ being a center of a ball from the covering introduced in Section~\ref{covering}, which fulfills $p(c(x_i))\ge 3$, any $y\in B_{2R_0}(x_i)$ and any $R\in (0, R_0)$  there holds
\begin{equation}\label{casepge2}
\int_{B_R(y)} \tha^{p(c)-2} |\nabla \cD|^2\, dx\le C\left(R^{\nu}+ \int_{A_R(y)}\tha^{p(c)-2} |\nabla \cD|^2\, dx\right),
\end{equation}
where $\nu>0$ comes from \eqref{hole-start}.
\end{lem}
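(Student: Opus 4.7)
The plan is to take the local inequality \eqref{hole-start} and upgrade it to the hole-filling form \eqref{casepge2} by controlling the annular term on its right-hand side. The hypothesis $p(c(x_i))\geq 3$ combined with the covering from Section~\ref{covering} (which gives $|p(c(x))-p(c(x_i))|\leq \varepsilon_0 \leq 1/10$ on $B_{8R_0}(x_i)$) implies $p(c(x))\geq 2.9 > 2$ throughout $B_{8R_0}(x_i)\supset B_{2R_0}(x_i)$. Since $\theta_A(\cD)\geq \sqrt{2}>1$ pointwise, this yields the pointwise lower bound $\theta_A^{p(c)-2}(\cD)\geq 1$ on that ball, which is the structural feature that makes this ``large $p$'' case tractable and on which the whole argument rests.

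The core task is to show
\[
\int_{A_R(y)} \theta_A^{p(c)-2}(\cD)\, \frac{|\meanless{\nabla \vv}{R}|^{2}}{R^{2}}\, dx \;\leq\; C\int_{A_R(y)} \theta_A^{p(c)-2}(\cD)\, |\nabla \cD|^{2}\, dx \;+\; C R^{\nu},
\]
and then to insert this into \eqref{hole-start}. The main ingredients are: first, since $\meanless{\nabla \vv}{R}$ has zero mean on $A_R(y)$, the classical Poincar\'e inequality converts $|\meanless{\nabla \vv}{R}|^{2}/R^{2}$ into $|\nabla^{2}\vv|^{2}$; second, in two dimensions with $\diver \vv=0$ the identity $\Delta \vv = 2\diver \cD$ combined with a Korn/Calder\'on--Zygmund argument allows $\nabla^{2}\vv$ to be replaced by $\nabla \cD$ in $L^{2}$; third, the pointwise bound $\theta_A^{p(c)-2}\geq 1$ lets us reinsert the weight on the right-hand side at no cost.

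To put the weight back on the \emph{left}-hand side, the plan is to combine the previous step with H\"older's inequality and the uniform higher integrability of $\theta_A(\cD)$ coming from \eqref{beta-norm2}: since $\theta_A^{p(c)/2}(\cD)$ is uniformly bounded in $W^{1,2}(\Omega)$ in $A$ and $n$, the 2D Sobolev embedding gives $\|\theta_A(\cD)\|_{L^{q}(\Omega)}\leq C(q)$ for every $q<\infty$, with $C(q)$ independent of $A$. Splitting $\theta_A^{p(c)-2}\,|\meanless{\nabla \vv}{R}|^{2}/R^{2}$ via H\"older and using the Sobolev--Poincar\'e inequality on $A_R(y)$ for the factor containing $\meanless{\nabla \vv}{R}$, the smallness $|A_R(y)|\sim R^{2}$ together with the high integrability of $\theta_A$ yields the residual term of order $R^{\nu}$ and keeps the main term weighted by $\theta_A^{p(c)-2}$, as needed.

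The main obstacle is precisely the weighted character of the inequality: $\theta_A^{p(c)-2}$ is solution-dependent and not in general an $A_{2}$ Muckenhoupt weight (it can be very large when $A$ is large), so no off-the-shelf weighted Poincar\'e inequality is directly available. The resolution leverages the two-dimensional Sobolev embedding applied to \eqref{beta-norm2}, which provides $A$-independent higher integrability of $\theta_A$; this is what permits the weight to be absorbed by H\"older at the cost of only a small power of $R$ and keeps the constant $C$ independent of both $A$ and $\varepsilon_{0}$, as required by the statement.
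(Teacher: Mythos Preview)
Your plan correctly identifies the starting point \eqref{hole-start} and the lower bound $\theta_A^{p(c)-2}\geq 1$ coming from $p(c)\geq 2$ on $B_{8R_0}(x_i)$. But step~5 --- handling the weight on the \emph{left}-hand side via H\"older plus the uniform $L^q$-integrability of $\theta_A$ from \eqref{beta-norm2} --- does not close. If you split
\[
\int_{A_R}\theta_A^{p(c)-2}\,\frac{|\meanless{\nabla\vv}{R}|^2}{R^2}\,dx
\le \frac{1}{R^2}\Bigl(\int_{A_R}\theta_A^{(p(c)-2)q'}\,dx\Bigr)^{1/q'}
\Bigl(\int_{A_R}|\meanless{\nabla\vv}{R}|^{2q}\,dx\Bigr)^{1/q}
\]
and then use Sobolev--Poincar\'e and H\"older back to $L^2$ on the second factor, you obtain
$R^{-2}\cdot R^{2(1-1/s)/q'}\cdot R^{2/q}\int_{A_R}|\nabla\cD|^2\,dx$
for some $s>1$, and the exponent $-2/(sq')$ is strictly negative. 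The smallness of $|A_R|$ gains exactly one power of $R^{2/q'}$ in the weight factor, but the extra H\"older needed to control $\theta_A^{(p(c)-2)q'}$ in $L^s$ then costs $R^{-2/(sq')}$, and no choice of $q,s$ cancels this. Uniform $L^q$-bounds on $\theta_A$ are not enough: what is really required is a \emph{scale-invariant} control of the weight on each annulus, i.e.\ a bound on $\fint_{A_R}\theta_A^{(p(c)-2)q'}\,dx$ independent of $R$, and mere global $L^q$-integrability does not provide that.

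The missing idea is the borderline embedding $W^{1,2}\hookrightarrow\mathrm{BMO}$ in two dimensions applied to $\eta:=\theta_A^{p(c)/2}$ (uniformly in $A$ by \eqref{beta-norm2}), together with the John--Nirenberg inequality. The paper writes $\theta_A^{p(c)-2}=\eta^{2(p(c)-2)/p(c)}$ and decomposes it into three pieces: the oscillation $|\eta-(\eta)_{A_R}|^{2(p(c)-2)/p(c)}$, the error from replacing the variable exponent by the constant $p_i^-(R)$, and the frozen average $|(\eta)_{A_R}|^{2(p_i^-(R)-2)/p_i^-(R)}$. John--Nirenberg bounds all $L^s$-means of $|\eta-(\eta)_{A_R}|$ over $A_R$ by $\|\eta\|_{\mathrm{BMO}}$ \emph{uniformly in $R$}, which is precisely the scale-invariant replacement for what your H\"older attempt lacks. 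The exponent error is absorbed using the H\"older continuity of $p(c)$ (from \eqref{cl2d}) together with $\|\eta\|_{L^\beta}\leq C(\beta)$; the frozen-average piece is constant on $A_R$ and factors through the ordinary Poincar\'e/Sobolev--Poincar\'e inequality. Only after all three pieces are recombined does the inequality \eqref{casepge2} follow with a constant independent of $A$, $R$ and $\varepsilon_0$. Your lower bound $\theta_A^{p(c)-2}\geq 1$ is used in this argument (to pass from $\int_{A_R}|\nabla\cD|^2$ back to the weighted integral), but it is only one ingredient; the BMO/John--Nirenberg step is the decisive one you are missing.
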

\begin{proof}We use \eqref{hole-start} to get the result. For this purpose, we need to estimate the integral on the right hand side of \eqref{hole-start}.
Assume that the center $x_i$ is fixed such that $p(c(x_i)) \ge 3$. Then from the properties of the covering and from the fact that $\varepsilon_0 <1$, wee see that $p_i^{-}\ge 2$. We also define
\begin{equation}
\eta:= \tha^{\frac{p(c)}{2}} (\cD)\label{def-eta}
\end{equation}
and also to shorten the notation,  we simply write $\tha$ instead of $\tha (\cD)$ in what follows. We start with a simple estimate for arbitrary $x_0 \in B_{R}(x_i)$ and arbitrary $R\in (0, R_0)$. Note that $B_{2R}(x_0)\subset B_{4R_0}(x_i)$.
\begin{equation}\label{MBg1}
\begin{aligned}
&\int_{A_R}\tha^{p(c)-2} \frac{\left|\meanless{\na \vv}{R} \right|^2}{R^2}\,dx \le C\int_{A_R} |\eta - (\eta)_{A_R}|^{\frac{2(p(c)-2)}{p(c)}} \frac{\left|\meanless{\na \vv}{R} \right|^2}{R^2}\,dx \\
&\qquad+\int_{A_R} \left||(\eta)_{A_R}|^{\frac{2(p(c)-2)}{p(c)}}-|(\eta)_{A_R}|^{\frac{2(p_i^{-}(R)-2)}{p_i^{-}(R)}}\right|\frac{\left|\meanless{\na \vv}{R} \right|^2}{R^2}\,dx\\
&\qquad +\int_{A_R}|(\eta)_{A_R}|^{\frac{2(p_i^{-}(R)-2)}{p_i^{-}(R)}}\frac{\left|\meanless{\na \vv}{R} \right|^2}{R^2}\,dx=: I_1 + I_2 + I_3.
\end{aligned}
\end{equation}
Next we take arbitrary $q>1$ and use the H\"{o}lder,  the Poincar\'{e} and the John-Nirenberg inequality to obtain
\begin{equation}\label{MBg2}
\begin{aligned}
I_1&\le R^{-2} \left(\int_{A_R}|\eta - (\eta)_{A_R}|^{\frac{2q'(p(c)-2)}{p(c)}}\,dx\right)^{\frac{1}{q'}} \left(\int_{A_R} \left|\meanless{\na \vv}{R} \right|^{2q}\,dx\right)^{\frac{1}{q}}\\
&\le C(q) R^{-\frac{2}{q}} \left(\int_{A_R}\frac{1+|\eta - (\eta)_{A_R}|^{\frac{2q'(p_i^+ -2)}{p_i^+}}}{R^2}\,dx\right)^{\frac{1}{q'}} \left(\int_{A_R} |\nabla \cD|^{\frac{2q}{q+1}}\,dx\right)^{\frac{q+1}{q}}\\
&\le C(q)  \left(1+\|\eta\|_{BMO}^{\frac{2(p_i^+ -2)}{p_i^+}}\right) \int_{A_R}|\nabla \cD|^{2}\,dx\le C\int_{A_R}\tha^{p(c)-2}|\nabla \cD|^{2}\,dx,
\end{aligned}
\end{equation}
where for that last inequality we used the embedding $W^{1,2}\hookrightarrow BMO$, the uniform bound~\eqref{beta-norm2} and the fact that $p(c)\ge 2$ in $A_R$ and that $\tha\ge 1$.
To estimate $I_2$, we first recall the inequality valid for all $B\ge 1$ and $r\ge s$
\begin{equation}\label{log-est}
B^r - B^s = \int_0^1 \frac{d}{dt} B^{tr + (1-t)s} \, dt = \int_0^1 B^{tr + (1-t)s} \ln B (r-s)   \, dt \le (r-s) B^r \ln B.
\end{equation}
Then using \eqref{log-est}  in $I_2$, together with  the Poincar\'{e} inequality and the fact that $p^+(R)\ge p(c)\ge p^{-}(R)\ge 2$, we obtain 
\begin{equation}\label{MBg4}
\begin{aligned}
I_2&\le  C\left \|(\eta)_{A_R}^{\frac{2(p(c)-2)}{p(c)}}-(\eta)_{A_R}^{\frac{2(p_i^{-}(R)-2)}{p_i^{-}(R)}}\right\|_{L^\infty(A_R)} \int_{A_R} |\nabla \cD|^2\, dx\\
&\le C(\eta)_{A_R}^{2}\left \|\frac{p(c)-2}{p(c)}-\frac{p_i^{-}(R)-2}{p_i^{-}(R)}\right\|_{L^{\infty}(B_{4R}(x_i))} \int_{A_R} \tha^{p(c)-2}|\nabla \cD|^2\, dx\\
&\le C(\eta)_{A_R}^{2}\|p(c)-p_i^-(R)\|_{L^{\infty}(B_{4R}(x_i))} \int_{A_R} \tha^{p(c)-2}|\nabla \cD|^2\, dx
\end{aligned}
\end{equation}
Next, since $\eta \in L^{\beta}$ for any $\beta \in(1,\infty)$ due to \eqref{beta-norm2}, moreover since the function $p$ is Lipschitz and $c\in \mathcal{C}^{0,\frac{\delta}{2+\delta}}$ uniformly with respect to $A$, we have for all $\beta \in [1,\infty)$
$$
(\eta)_{A_R}^{2}\|p(c)-p_i^-(R)\|_{L^{\infty}(B_{4R}(x_i))}\le C(\beta)R^{-\frac{4}{\beta}} R^{\frac{\delta}{2+\delta}}.
$$
Hence, setting $\beta:=4(\delta+2)/\delta$ and substituting the above estimate into \eqref{MBg4}, we get
\begin{equation}\label{MBg5}
\begin{aligned}
I_2&\le  C\int_{A_R} \tha^{p(c)-2}|\nabla \cD|^2\, dx.
\end{aligned}
\end{equation}
Finally, we focus on estimate for $I_3$. First, since $(\eta)_{A_R}$ and $p_i^-(R)$ are constants, we can use the standard Sobolev-Poincar\'{e} inequality and the using the triangle and the H\"{o}lder inequality, we obtain
\begin{equation}\label{MBg3}
\begin{aligned}
I_3&\le \int_{A_R} |(\eta)_{A_R}|^{\frac{2(p_i^{-}(R)-2)}{p_i^{-}(R)}}\frac{\left|\meanless{\na \vv}{R} \right|^2}{R^2}\,dx \le C\left(\int_{A_R} |(\eta)_{A_R}|^{\frac{p_i^{-}(R)-2}{p_i^{-}(R)}}\frac{|\nabla \cD|}{R}\,dx\right)^2\\
&\le C\left(\int_{A_R} |(\eta)_{A_R}-\eta|^{\frac{p_i^{-}(R)-2}{p_i^{-}(R)}}\frac{|\nabla \cD|}{R}\,dx\right)^2+ C\left( \int_{A_R} |\eta|^{\frac{p_i^{-}(R)-2}{p_i^{-}(R)}}\frac{|\nabla \cD|}{R}\,dx\right)^2\\
&\le C\|\eta\|_{BMO}^{\frac{2(p_i^{+}-2)}{p_i^{+}}}\int_{A_R}|\nabla \cD|^2\,dx+ C \int_{A_R} \eta^{\frac{2(p_i^{-}(R)-2)}{p_i^{-}(R)}}|\nabla \cD|^2\,dx\\
&\le C\int_{A_R} \tha^{p(c)-2}|\nabla \cD|^2\,dx,
\end{aligned}
\end{equation}
where for the last inequality we used \eqref{beta-norm2}, the embedding theorem, the John-Nirenberg inequality and the fact that $2\le p_i^- \le p(c)$ in $A_R$. Consequently, substituting estimates \eqref{MBg2}, \eqref{MBg5} and \eqref{MBg3} into \eqref{MBg1} and combining the result with \eqref{hole-start}, we obtain~\eqref{casepge2}.
\end{proof}
The second hole-filling inequality is related to small values of $p(c)$.
\begin{lem}[Hole-filling inequality II] \label{hole1A}
There exists a uniform constant $C$, which is independent of $A$ and $\varepsilon_0$ such that for any $x_i$ being a center of a ball from the covering introduced in Section~\ref{covering}, which fulfills $p(c(x_i))\le 3/2$, any $y\in B_{2R_0}(x_i)$ and any $R\in (0, R_0)$  there holds
\begin{equation}\label{casepge2M}
\int_{B_R(y)} \tha^{p(c)-2} |\nabla \cD|^2\, dx\le C\left(R^{\nu}+ \int_{A_R(y)}\tha^{p(c)-2} |\nabla \cD|^2\, dx\right),
\end{equation}
where $\nu>0$ comes from \eqref{hole-start}.
\end{lem}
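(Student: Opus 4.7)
The plan is to adapt the strategy of Lemma~\ref{hole1} to the subquadratic regime. By the construction of the covering (Section~\ref{covering}) and the assumption $p(c(x_i))\le 3/2$, one has $p(c(x))\le 3/2+\varepsilon_0<2$ throughout $B_{8R_0}(x_i)$, and in particular on $B_{4R}(y)\subset B_{8R_0}(x_i)$; consequently the weight satisfies $\tha^{p(c)-2}\le 1$ there. As in Lemma~\ref{hole1}, starting from~\eqref{hole-start} the goal reduces to bounding
\[
\int_{A_R(y)}\tha^{p(c)-2}\frac{|\meanless{\nabla\vv}{R}|^2}{R^2}\,dx \le C\int_{A_R(y)}\tha^{p(c)-2}|\nabla\cD|^2\,dx + CR^{\nu'}
\]
for some $\nu'>0$, with $C$ independent of $A$ and $\varepsilon_0$.

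Whereas Lemma~\ref{hole1} used $\eta=\tha^{p(c)/2}$ with the decomposition $\tha^{p(c)-2}=\eta^{2(p(c)-2)/p(c)}$ having a \emph{nonnegative} exponent (allowing a subadditivity-based split), here that exponent is negative. I would instead work with $\tilde\eta:=\tha^{(p(c)-2)/2}(\cD)$, for which $\tilde\eta^2=\tha^{p(c)-2}$ and $0<\tilde\eta\le 1$. A chain-rule computation analogous to that in Section~\ref{SS2.3}, combined with~\eqref{est:w12} and~\eqref{beta-norm2}, shows $\tilde\eta\in W^{1,2}$ uniformly in $A$, and in particular $\tilde\eta\in L^\infty\cap\mathrm{BMO}$ with a uniform bound. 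Using $\tilde\eta^2\le 2(\tilde\eta-(\tilde\eta)_{A_R})^2+2(\tilde\eta)_{A_R}^2$ decomposes the integral of interest into a fluctuation part $\tilde I_1$ and a frozen part $\tilde I_3$, analogous to $I_1$ and $I_3$ of Lemma~\ref{hole1}. Each is estimated by H\"older's inequality with conjugate exponents $(q,q')$, the John--Nirenberg estimate (giving $\|\tilde\eta-(\tilde\eta)_{A_R}\|_{L^{2q'}(A_R)}^2\le CR^{2/q'}$), the 2D Sobolev--Poincar\'e inequality $\|\meanless{\nabla\vv}{R}\|_{L^{2q}(A_R)}\le C\|\nabla^2\vv\|_{L^{2q/(q+1)}(A_R)}$, a local Korn inequality, and a weighted H\"older interpolation reducing $\|\nabla\cD\|_{L^{2q/(q+1)}(A_R)}^2$ to $\int_{A_R}\tha^{p(c)-2}|\nabla\cD|^2$ via an auxiliary factor $\int_{A_R}\tha^{(2-p(c))q}$.

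The main technical obstacle is that in Lemma~\ref{hole1} one could exploit $\tha^{2-p(c)}\le 1$ uniformly, while here $\tha^{2-p(c)}\ge 1$ is not uniformly controlled in $A$. To circumvent this, the uniform higher-integrability bound~\eqref{beta-norm2} must be invoked to obtain $\int_{A_R}\tha^{(2-p(c))\gamma}\le CR^{2-2/\beta}$ for any $\beta<\infty$ (with $C=C(\beta,\gamma)$ independent of $A$), which introduces an arbitrarily small $R$-loss that must be compensated. I expect this compensation to come from the H\"older continuity of $c$---via the oscillation bound $p_i^+(R)-p_i^-(R)\le CR^{\delta/(\delta+2)}$ available through~\eqref{cl2d}---in an intermediate ``frozen-exponent'' step analogous to the $I_2$-term of Lemma~\ref{hole1}, producing the required net positive power $\nu'>0$ of $R$. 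The hole-filling inequality~\eqref{casepge2M} then follows by combining the resulting estimate with~\eqref{hole-start}.
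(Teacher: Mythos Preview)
Your plan has a genuine gap at exactly the point you flag as ``the main technical obstacle''. After your weighted H\"older interpolation, the bound on the fluctuation term $\tilde I_1$ (and similarly on $\tilde I_3$) takes the form
\[
\tilde I_1 \le C\,R^{-2+\frac{2}{q'}}\Bigl(\int_{A_R}\tha^{(2-p(c))q}\Bigr)^{\frac1q}\int_{A_R}\tha^{p(c)-2}|\nabla\cD|^2\,dx.
\]
Using \eqref{beta-norm2} only gives $\int_{A_R}\tha^{(2-p(c))q}\le C(\beta)R^{2-2/\beta}$, and the resulting prefactor is $R^{-2+2/q'+2/q-2/(\beta q)}=R^{-2/(\beta q)}$. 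Thus you obtain a factor $R^{-\varepsilon}$ (with $\varepsilon>0$ as small as you like) \emph{in front of the annular integral}. This is fatal for hole-filling: after filling the hole the ratio becomes $1-cR^{\varepsilon}$ rather than a fixed constant below $1$, and the iteration of Lemma~\ref{HoleFillLem} yields no power decay. The H\"older continuity of $c$ cannot rescue this: the positive powers of $R$ it produces (via $p_i^+(R)-p_i^-(R)\le CR^{\delta/(2+\delta)}$) appear only as multipliers of \emph{bounded} auxiliary quantities (such as $(\eta)_{A_R}$ to some power), never as multipliers of the annular integral itself.

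What the paper does differently is to keep $\eta=\tha^{p(c)/2}$ and avoid bounding $\int_{A_R}\tha^{(2-p(c))q}$ absolutely. Instead, a single H\"older/Poincar\'e step produces the \emph{product}
\[
\Bigl(\fint_{A_R}\tha^{(p(c)-2)q'}\Bigr)^{\frac1{q'}}\Bigl(\fint_{A_R}\tha^{(2-p(c))q}\Bigr)^{\frac1q}
\]
as prefactor of the annular integral, and one shows this product is bounded by a constant independent of $A$ and $R$. The point is a cancellation: after splitting off BMO-fluctuations of $\eta$ (using also $\|\eta^{-1}\|_{BMO}\le C$, since $\eta\ge 1$), the second factor behaves like $(\eta)_{A_R}^{\,2(2-p_i^-(R))/p_i^-(R)}$, while in the first factor $\tha^{(p(c)-2)q'}=\eta^{-2q'(2-p(c))/p(c)}$; combining them yields $\bigl((\eta)_{A_R}/\eta\bigr)^{2q'(2-p(c))/p(c)}$, which is controlled by $1+|\eta-(\eta)_{A_R}|^{\cdots}$ and hence by the John--Nirenberg inequality. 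The exponent-freezing error is handled exactly as your $I_2$-analogy suggests, but it sits outside the annular-integral term. Your choice of $\tilde\eta=\tha^{(p(c)-2)/2}$ destroys this cancellation, because the two complementary powers never get multiplied together.
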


\begin{proof}Similarly as before, we just need to estimate the integral on the right hand side of \eqref{hole-start}.
Assume that the center $x_i$ is fixed such that $p(c(x_i)) \le 3/2$. Then from the properties of the covering and from the fact that $\varepsilon_0 <1/2$, we see that $p_i^{+}< 2$. We also recall the definition of $\eta$, see \eqref{def-eta}, i.e., $\eta:= \tha^{\frac{p(c)}{2}} (\cD)$ and since $\eta\ge 1$, it follows from \eqref{beta-norm2} that
\begin{equation}
\|\eta\|_{BMO} + \|\eta^{-1}\|_{BMO}\le C
\label{eta-BMOMB}
\end{equation}
with constant $C$ independent of $A$. Next, we use the H\"{o}lder and the Poincar\'{e} inequality to get (keeping the notation for $A_R$) for arbitrary $q\in (1,\infty)$
\begin{equation}\label{MBg1M}
\begin{aligned}
&\int_{A_R}\tha^{p(c)-2} \frac{\left|\meanless{\na \vv}{R} \right|^2}{R^2}\,dx \le \frac{C}{R^2}\left(\int_{A_R} \tha^{(p(c)-2)q'}\,dx \right)^{\frac{1}{q'}} \left(\int_{A_R} \left|(\nabla \vv)_{0,R}\right|^{2q}\,dx \right)^{\frac{1}{q}}\\
&\le \frac{C}{R^2}\left(\int_{A_R} \tha^{(p(c)-2)q'}\,dx \right)^{\frac{1}{q'}} \left(\int_{A_R} \left|\nabla \cD\right|^{\frac{2q}{1+q}}\,dx \right)^{\frac{q+1}{q}}\\
&\le C\left(\int_{A_R} \frac{\tha^{(p(c)-2)q'}}{R^2}\,dx \right)^{\frac{1}{q'}}\left(\int_{A_R} \frac{\tha^{q(2-p(c))}}{R^2}\,dx \right)^{\frac{1}{q}} \int_{A_R} \tha^{p(c)-2}|\nabla \cD|^2\,dx
\end{aligned}
\end{equation}
Hence, if we show that for some $q$
\begin{equation}\label{allwe}
\left(\int_{A_R} \frac{\tha^{(p(c)-2)q'}}{R^2}\,dx \right)^{\frac{1}{q'}}\left(\int_{A_R} \frac{\tha^{q(2-p(c))}}{R^2}\,dx \right)^{\frac{1}{q}} \le C,
\end{equation}
then following the proof of Lemma~\ref{hole1} we get \eqref{casepge2M}.

To show \eqref{allwe}, we first notice that due to the choice of $\varepsilon_0$ and the fact that $p(c(x_i))\le 3/2$ we have that $p(c)<2$ in $B_{8R}$. Hence, we can estimate the second integral in \eqref{allwe} as follows
\begin{equation*}
\begin{split}
\int_{A_R} \frac{\tha^{q(2-p(c))}}{R^2}\,dx&=\int_{A_R} \frac{\eta^{\frac{2q(2-p(c))}{p(c)}}}{R^2}\, dx \\
&\le C\int_{A_R} \frac{|\eta-(\eta)_{A_R}|^{\frac{2q(2-p(c))}{p(c)}}+|(\eta)_{A_R}|^{\frac{2q(2-p(c))}{p(c)}}}{R^2}\, dx\\
&\le C\left(1+\int_{A_R} \frac{|(\eta)_{A_R}|^{\frac{2q(2-p(c))}{p(c)}}}{R^2}\, dx\right)\le C+C|(\eta)_{A_R}|^{\frac{2q(2-p_i^{-}(R))}{p_i^{-}(R)}},
\end{split}
\end{equation*}
where we used \eqref{eta-BMOMB} and the fact that $\eta\ge 1$. Next, using this estimate in \eqref{allwe} and the facts that $\tha\ge 1$ and $p(c)\le 2$, we have
\begin{equation}\label{allwe2}
\begin{split}
&\left(\int_{A_R} \frac{\tha^{(p(c)-2)q'}}{R^2}\,dx \right)^{\frac{1}{q'}}\left(\int_{A_R} \frac{\tha^{q(2-p(c))}}{R^2}\,dx \right)^{\frac{1}{q}} \\
&\quad \le C\left(\fint_{A_R} \tha^{(p(c)-2)q'}|(\eta)_{A_R}|^{\frac{2q'(2-p_i^{-}(R))}{p_i^{-}(R)}}\,dx \right)^{\frac{1}{q'}}\\
&\quad \le C\left(\fint_{A_R} \tha^{(p(c)-2)q'}\left||(\eta)_{A_R}|^{\frac{2q'(2-p_i^{-}(R))}{p_i^{-}(R)}}-|(\eta)_{A_R}|^{\frac{2q'(2-p(c))}{p(c)}}\right|\,dx \right)^{\frac{1}{q'}}\\
&\qquad +C\left(\fint_{A_R} \tha^{(p(c)-2)q'}|(\eta)_{A_R}|^{\frac{2q'(2-p(c))}{p(c)}}\,dx \right)^{\frac{1}{q'}}\\
&\quad \le C\left(\fint_{A_R} \left||(\eta)_{A_R}|^{\frac{2q'(2-p_i^{-}(R))}{p_i^{-}(R)}}-|(\eta)_{A_R}|^{\frac{2q'(2-p(c))}{p(c)}}\right|\,dx \right)^{\frac{1}{q'}}\\
&\qquad +C\left(\fint_{A_R} \left|\frac{(\eta)_{A_R}}{\eta}\right|^{\frac{2q'(2-p(c))}{p(c)}}\,dx \right)^{\frac{1}{q'}}.
\end{split}
\end{equation}
Then using \eqref{log-est}, \eqref{beta-norm2} and \eqref{cl2d}, we have (compare with the estimates above \eqref{MBg5})
$$
\begin{aligned}
&\fint_{A_R} \left||(\eta)_{A_R}|^{\frac{2q'(2-p_i^{-}(R))}{p_i^{-}(R)}}-|(\eta)_{A_R}|^{\frac{2q'(2-p(c))}{p(c)}}\right|\,dx \le C(p_i^+(R)-p_i^{-}(R))|(\eta)_{A_R}|^{\frac{2q'(2-p^{-})}{p^{-}}+1}\\
&\qquad \le C
\end{aligned}
$$
and using \eqref{eta-BMOMB} and the fact that $\eta \ge 1$ we deduce
$$
\begin{aligned}
\fint_{A_R} \left|\frac{(\eta)_{A_R}}{\eta}\right|^{\frac{2q'(2-p(c))}{p(c)}}\,dx &\le C(q)\fint_{A_R} \left|\frac{(\eta)_{A_R}-\eta}{\eta}\right|^{\frac{2q'(2-p(c))}{p(c)}}\,dx+C(q)\le C(q).
\end{aligned}
$$
Consequently, substituting two above estimates into \eqref{allwe2}, setting for example $q:=2$, we get~\eqref{allwe}. The rest of the proof is the same as the proof of Lemma~\ref{hole1}.
\end{proof}

The last hole-filling inequality, where however the constants will depend on $A$, is related to any  values of $p(c)$ but we need it to deal with the case  $\frac{3}{2}\leq p(c)\leq 3$.
\begin{lem}[Hole-filling inequality III]\label{LemmaEta}Let $y\in B_{2R_0}(x_i)$, where $x_i$ is  a center of a ball from the covering introduced in Section~\ref{covering}. Then for any $R\in (0, R_0)$ there holds
\begin{equation}\label{hole-2}
\int_{B_R(y)} \tha^{p(c)-2} |\nabla \cD|^2\, dx\le C\left(R^{\nu}+ \frac{c_0^i(A)}{c_1^i(A)}\int_{A_R(y)}\tha^{p(c)-2} |\nabla \cD|^2\, dx\right),
\end{equation}
where $\nu>0$ comes from \eqref{hole-start}, $R_0$ from the covering  and $C$ is independent of $A$, $R_0$ and $\varepsilon_0$.
\end{lem}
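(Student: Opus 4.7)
The plan is to derive \eqref{hole-2} from the localized inequality \eqref{hole-start} by a deliberately crude ``de-weighting--reweighting'' argument that, in contrast to Lemmas~\ref{hole1} and~\ref{hole1A}, gives up every BMO-type cancellation but in return is robust for every value of $p(c)\in[p^-,p^+]$---in particular for the intermediate range around $2$ excluded from the first two lemmas. The price paid is exactly the factor $c_0^i(A)/c_1^i(A)$, which will be acceptable in the subsequent iteration because this lemma is intended for use only in the regime $3/2\le p(c(x_i))\le 3$, where that ratio can still be controlled.

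First I would observe that the hypotheses $y\in B_{2R_0}(x_i)$ and $R\le R_0$ give $A_R(y)\subset B_{4R_0}(x_i)\subset B_{8R_0}(x_i)$, so the pointwise estimates from \eqref{cAA} (read in their natural $\tha^{p(c)-2}$ form, which is precisely what the exponents in \eqref{cA} are chosen to produce) are available throughout the annulus in the two-sided form $c_1^i(A)\le\tha^{p(c)-2}(\cD)\le c_0^i(A)$. Pulling the upper constant out of the annular integral in \eqref{hole-start} yields
$$
\int_{A_R(y)}\tha^{p(c)-2}(\cD)\,\frac{|\meanless{\na\vv}{R}|^2}{R^2}\,dx\le c_0^i(A)\int_{A_R(y)}\frac{|\meanless{\na\vv}{R}|^2}{R^2}\,dx.
$$

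The unweighted integral on the right I would estimate by the scale-invariant two-dimensional Sobolev--Poincar\'e embedding $W^{1,1}(A_R)\hookrightarrow L^2(A_R)$ applied componentwise to $\na\vv$, combined with the pointwise identity, valid for divergence-free $\vv$ in two dimensions, that every component of $\na^2\vv$ is a linear combination of components of $\na\cD$ (for instance $\partial_2^2 v_1=2\partial_2\cD_{12}+\partial_1\cD_{11}$ after substituting $\diver\vv=0$), giving $|\na^2\vv|\le C|\na\cD|$ pointwise. A final H\"older step together with $|A_R|\sim R^2$ converts the resulting $L^1$ norm into an $L^2$ norm and produces
$$
\int_{A_R(y)}\frac{|\meanless{\na\vv}{R}|^2}{R^2}\,dx\le C\int_{A_R(y)}|\na\cD|^2\,dx,
$$
with $C$ depending only on the shape of a planar annulus and hence independent of $A$, $R_0$ and $\varepsilon_0$.

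Reinstating the weight through the lower bound $\tha^{p(c)-2}\ge c_1^i(A)$ costs only a factor $1/c_1^i(A)$, and chaining the three preceding displays gives
$$
\int_{A_R(y)}\tha^{p(c)-2}\frac{|\meanless{\na\vv}{R}|^2}{R^2}\,dx\le C\,\frac{c_0^i(A)}{c_1^i(A)}\int_{A_R(y)}\tha^{p(c)-2}|\na\cD|^2\,dx,
$$
which, plugged into \eqref{hole-start}, is exactly \eqref{hole-2}. I expect the only genuinely delicate point to be the divergence-free reduction $|\na^2\vv|\le C|\na\cD|$; this is precisely the place where being in two dimensions is used (in higher dimensions one would have to go through a Calder\'on--Zygmund step, which fails at the $L^1$ endpoint). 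Every other constant is purely geometric, so the full $A$-dependence of the estimate is explicitly and only the ratio $c_0^i(A)/c_1^i(A)$, as claimed.
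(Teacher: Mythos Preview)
Your proof is correct and follows the paper's de-weight/Poincar\'e/re-weight scheme almost verbatim; the only cosmetic difference is that the paper applies the $L^2$ Poincar\'e inequality on $A_R$ in one step, whereas you route through the critical Sobolev--Poincar\'e embedding $W^{1,1}\hookrightarrow L^2$ and then H\"older, which produces the same bound. One correction to your commentary: the pointwise estimate $|\nabla^2\vv|\le C|\nabla\cD|$ is the purely algebraic identity $\partial_i\partial_j v_k=\partial_i\cD_{jk}+\partial_j\cD_{ik}-\partial_k\cD_{ij}$, valid in every dimension and without the divergence-free constraint, so this step is not where two-dimensionality is genuinely used (indeed the paper's $L^2$-Poincar\'e version of this lemma is dimension-free).
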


\begin{proof}
Thus we consider $y\in B_{2R_0}(x_i)$ for some $i$ and consider balls $B_R(y)$ but omit writing $x_0$. We again use \eqref{hole-start} to get the result and all we need is just to  estimate the integral on the right hand side of \eqref{hole-start}. Hence, using \eqref{cA}, \eqref{cAA} and the Poincar\'{e} inequality, we have (we omit writing here the dependence on $A$, which is however hidden in the definition \eqref{cA})
\begin{equation}\label{MBg6}
\begin{aligned}
\int_{A_R}\tha^{p(c)-2} \frac{\left|\meanless{\na \vv}{R} \right|^2}{R^2}\,dx &\le Cc_0^i\int_{A_R}  \frac{\left|\meanless{\na \vv}{R} \right|^2}{R^2}\,dx \le C c_0^i\int_{A_R}|\nabla \cD|^2\, dx \\
&\le \frac{C c_0^i}{c_1^i}\int_{A_R}\tha^{p(c)-2}|\nabla \cD|^2\, dx.
\end{aligned}
\end{equation}
Substituting this inequality into \eqref{hole-start}, we obtain \eqref{hole-2}.
\end{proof}

\subsection{Proof of the main theorem}

We proceed here as follows. In the first step, we show that $\nabla \vv$ is H\"{o}lder continuous but with the modulus of continuity dependent on $A$. In the second step, we however show that we can choose $A$ such that \eqref{dreamA} holds. Consequently, $\vv$ will be a solution to the original problem and therefore belonging to $\mathcal{C}^{1,\mu}$ with some $\mu>0$. Then we can use the standard regularity result for the Stokes system with continuous coefficients to prove the full regularity of solution. Since the last step is quite classical in the theory of PDE's we omit the proof here.
\paragraph{\bf Step 1: Non-uniform $\mathcal{C}^{1,\mu}$ estimates}
We shall start with the following result that will directly imply H\"{o}lder continuity of $\cD$ and consequently also $\nabla \vv$. However, this result will depend on $A$. Nevertheless, this estimate will be used further to obtain the final result. Note that through this section we keep the notation for $c_0^i$ and $c_1^i$ from \eqref{cA} as well as the covering by balls $B_{R_0}(x_i)$.
\begin{lem}[Key estimate]\label{key-est}
There exists uniform constants $C$ and $\nu>0$ independent of $A$ and $\varepsilon_0$ such that for any $x_i$, any $y\in B_{R}(x_i)$ and all $R\in (0,R_0)$, we have
\begin{equation}\label{key1}
\int_{B_R(y)} \tha^{p(c)-2}\left| \na \cD \right|^2\, dx \le CR^{\mu_i}\left(1+R^{-\mu_i}_0\right),
\end{equation}
where $\mu_i$ is given by
\be \label{dfmui}
\mu_i:=\min \left\{\frac{\nu}{2}, \log_2 \left(\frac{c^i_1(A)+Cc^i_0(A)}{Cc^i_0(A)}\right)\right\}
\ee
Furthermore, there exists $\mu_0>0$ independent of $A$ and $\varepsilon_0$ such that if $p(c(x_i))\ge 3$ or $p(c(x_i))\le 3/2$ then \eqref{key1} holds with $\mu_0$ instead of $\mu_i$, where $\mu_0$ is independent of $A$.
\end{lem}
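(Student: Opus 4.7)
The entire statement is a direct application of the Hole Filling Lemma (Lemma 1) to the hole-filling inequalities just proven, combined with the global uniform $W^{2,2}$-type bound \eqref{est:w12}. Concretely, I take $g := \tha^{p(c)-2}|\nabla \cD|^2$, which by \eqref{est:w12} satisfies the uniform bound
\[
\int_{B_{R_0}(y)} |g|\,dx \le \int_{\Omega} \tha^{p(c)-2}|\nabla \cD|^2\,dx \le C,
\]
with a constant independent of $A$ and $n$. This controls the ``initial datum'' term $2^{\nu}\int_{B_{R_0}}|g|/R_0^{\mu}\,dx$ in the conclusion \eqref{hole-result} of the Hole Filling Lemma by $CR_0^{-\mu}$, which is exactly the origin of the factor $(1+R_0^{-\mu_i})$ on the right-hand side of \eqref{key1}.

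\textbf{General case.} For arbitrary $y \in B_{R_0}(x_i)$ and $R \in (0,R_0)$, Lemma~\ref{LemmaEta} provides
\[
\int_{B_R(y)}|g|\,dx \le \alpha \int_{A_R(y)}|g|\,dx + \beta R^{\nu},
\qquad \alpha := C\,\tfrac{c_0^i(A)}{c_1^i(A)}, \quad \beta := C,
\]
which is exactly the hypothesis \eqref{hyp-lem} of Lemma 1. Applying Lemma 1 with these $\alpha$ and $\beta$, the resulting exponent
\[
\mu = \min\!\left\{\tfrac{\nu}{2},\;\log_2\tfrac{1+\alpha}{\alpha}\right\}
    = \min\!\left\{\tfrac{\nu}{2},\;\log_2\tfrac{c_1^i(A)+C c_0^i(A)}{C c_0^i(A)}\right\} = \mu_i
\]
matches exactly the definition \eqref{dfmui}. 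Plugging in the uniform bound on $\int_{B_{R_0}}|g|$ above yields \eqref{key1}.

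\textbf{Extreme cases.} When $p(c(x_i))\ge 3$ we use Lemma~\ref{hole1} instead, and when $p(c(x_i))\le 3/2$ we use Lemma~\ref{hole1A}. In both of these the coefficient $\alpha$ appearing in \eqref{hyp-lem} is a constant $C$ independent of $A$ and $\varepsilon_0$, so the Hole Filling Lemma produces
\[
\mu_0 := \min\!\left\{\tfrac{\nu}{2},\;\log_2\tfrac{1+C}{C}\right\} > 0,
\]
which is independent of $A$ and $\varepsilon_0$, and the same argument as above yields \eqref{key1} with $\mu_0$ in place of $\mu_i$.

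\textbf{Main obstacle / remark.} The nontrivial input has already been done in the hole-filling lemmas; at this point everything reduces to bookkeeping. The only thing worth verifying is that the hypothesis $0<R\le R_0 \le 1$ of Lemma 1 is satisfied (true by the construction of the covering in Section~\ref{covering}) and that the iteration in Lemma 1 is carried out on balls of radius doubling from $R$ to something comparable with $R_0$, which is available since the hole-filling inequalities are valid uniformly for all $R\in(0,R_0)$ centered at any $y\in B_{2R_0}(x_i)$ (so in particular doubling $R$ keeps us inside $B_{2R_0}(x_i)$ as long as $2^m R \le R_0$). No genuine analytical difficulty arises here.
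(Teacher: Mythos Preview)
Your proposal is correct and follows essentially the same route as the paper: define $g:=\tha^{p(c)-2}|\nabla\cD|^2$, feed the hole-filling inequality from Lemma~\ref{LemmaEta} (respectively Lemmas~\ref{hole1} and \ref{hole1A} in the extreme cases) into the Hole Filling Lemma with $\alpha=C c_0^i(A)/c_1^i(A)$ (respectively $\alpha=C$), and then bound the initial term $\int_{B_{R_0}}g$ via the uniform estimate \eqref{est:w12}. There is no substantive difference from the paper's argument.
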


\begin{proof} We omit writing $y$ or $x_i$ in what follows. We define $g:=\tha^{p(c)-2}\left| \na^2 \vv \right|^2$ and we see from  Lemma~\ref{LemmaEta} that it satisfies
\begin{equation}
\int_{B_R}g\, dx \le CR^{\nu} + \frac{Cc_0^i(A)}{c_1(A)} \int_{A_R} g\, dx.
\end{equation}
Thus, we can use Lemma~\ref{HoleFillLem} to obtain (note that the definition \eqref{dfmui} corresponds to \eqref{dfmu})
\be\label{goodestimate}
\int_{B_{R}}\tha^{p(c)-2}\left| \na \cD \right|^2 \,dx \leq R^{\mu_i} \left(1+ \int_{B_{R_0}} \frac{\tha^{p(c)-2}\left| \na \cD \right|^2}{R_0^{\mu_i}}\, dx\right).
\ee
Consequently, using \eqref{est:w12}, we deduce \eqref{key1}. For the second part of the proof, we use the estimates stated in Lemma~\ref{hole1} and Lemma~\ref{hole1A} (here the assumptions $p(c(x_i))\ge 3$ or $p(c(x_i))\le 3/2$ come from) and using again Lemma~\ref{HoleFillLem}, we get \eqref{key1} but with $\mu_0$ independent of $A$.
\end{proof}
From Lemma~\ref{key-est} we can deduce that $\nabla \vv$ is H\"{o}lder continuous. Indeed, defining $\mu:=\min_i \mu_i$ and observing that
$$
\tha^{p(c)-2}\ge (c_1(A))^{p^--2},
$$
we have from \eqref{key1} that for any $y\in \R^2$ and any $R\in (0, R_0)$ (using the point-wise estimate $|\nabla^2 \vv|\le C|\nabla \cD|$) that
$$
\int_{B_R(y)} |\nabla^2 \vv|^2\, dx \le C(A)R^{\mu}.
$$
Thus, using the Morrey embedding, we get that $\vv \in \mathcal{C}^{1,\frac{\mu}{2}}$. However, $\mu$ depends on $A$ and our goal is to show independent estimate.

\paragraph{\bf Step 2: Choice of $A$ such that $\|\cD\|_{\infty}\le A$}
We start with a simple consequence of Lemma~\ref{key-est}.
\begin{lem}[Key estimate for $\tha$]\label{key-esttha}
There exists uniform constants $C$ and $\nu>0$ independent of $A$ and $\varepsilon_0$ such that for any $x_i$, any $y\in B_{R}(x_i)$ and all $R\in (0,R_0)$, we have
\begin{equation}\label{key2}
\int_{B_R(y)} |\nabla \tha^{\frac{p(c)}{2}}|^2\, dx \le CR^{\min\{\mu_i,\frac{\delta}{2(2+\delta)}\}}\left(1+R^{-\mu_i}_0\right),
\end{equation}
where $\mu_i$ is given by \eqref{dfmui} and $\delta$ comes from \eqref{cl2d}.
Furthermore, there exists $\mu_0>$ independent of $A$ and $\varepsilon_0$ such that if $p(c(x_i))\ge 3$ or $p(c(x_i)) \le 3/2$ then \eqref{key2} holds with $\mu_0$ instead of $\mu_i$, where $\mu_0$ is independent of $A$.
\end{lem}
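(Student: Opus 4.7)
The plan is to reduce \eqref{key2} to the previously established estimate \eqref{key1} via a pointwise chain-rule identity, and to control the extra error coming from the spatial dependence of $p(c)$ using the uniform $L^{2+\delta}$-bound on $\nabla c$ from \eqref{cl2d} together with the arbitrary-order integrability of $\tha^{p(c)/2}$ from \eqref{beta-norm2}.

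Concretely, writing $\tha^{p(c)/2}=\exp((p(c)/2)\log\tha)$ and observing that the definition \eqref{tha} gives $|\nabla\tha|\le|\nabla\cD|$ a.e.\ (and $\nabla\tha\equiv 0$ on $\{|\cD|\ge A\}$), the chain rule together with the Lipschitz bound on $p$ yields
\begin{equation*}
\bigl|\nabla\tha^{p(c)/2}\bigr|^{2}\le C\,\tha^{p(c)-2}|\nabla\cD|^{2}+C\,\tha^{p(c)}(\log\tha)^{2}|\nabla c|^{2},
\end{equation*}
with $C$ depending only on $\|p'\|_\infty$ and $p^+$. Integration over $B_R(y)$ then reduces the proof to estimating the two resulting integrals.

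The first integral is precisely the quantity controlled by Lemma~\ref{key-est}, contributing at most $CR^{\mu_i}(1+R_0^{-\mu_i})$ in general, and $CR^{\mu_0}(1+R_0^{-\mu_0})$ in the distinguished regimes $p(c(x_i))\le 3/2$ or $p(c(x_i))\ge 3$. For the second integral I would apply H\"older with conjugate exponents $(s,s')$: choose $s'$ strictly below $(2+\delta)/2$, use $(\log\tha)^{2}\le C_{\varepsilon}\tha^{\varepsilon}$ together with \eqref{beta-norm2} to see that $\tha^{p(c)}(\log\tha)^{2}\in L^{s}(\Omega)$ uniformly in $A$, and then bound $\||\nabla c|^{2}\|_{L^{s'}(B_R)}$ by H\"older against $|\nabla c|^{2+\delta}\in L^{1}(\Omega)$, which produces a volume factor $|B_R|^{1/s'-2/(2+\delta)}$. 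Tuning $s'$ appropriately, this exponent can be made equal to $\delta/(2(2+\delta))$, so the second integral is bounded by $CR^{\delta/(2(2+\delta))}$ uniformly in $A$.

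Adding the two pieces and using $R\le R_0\le 1$ to absorb the larger exponent into the smaller one yields \eqref{key2} with exponent $\min\{\mu_i,\delta/(2(2+\delta))\}$; in the special ranges of $p(c(x_i))$ one simply replaces $\mu_i$ by the $A$-independent $\mu_0$ from Lemma~\ref{key-est}. The only delicate point is the interplay of the two H\"older exponents in the second integral: $s'$ must be strictly below $(2+\delta)/2$ to extract a positive power of $R$, while $s$ must remain finite so that \eqref{beta-norm2} applies. Because \eqref{beta-norm2} grants every finite moment, this is a routine book-keeping tuning rather than a genuine obstruction, and no essential difficulty is expected once the pointwise chain-rule bound is in place.
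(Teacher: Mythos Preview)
Your proposal is correct and follows essentially the same line as the paper's own proof: a pointwise chain-rule bound splitting $|\nabla\tha^{p(c)/2}|^2$ into the weighted second-gradient term handled by Lemma~\ref{key-est} plus an error term involving $|\nabla c|^2$, which is then estimated by H\"older using \eqref{cl2d} and \eqref{beta-norm2}. The only cosmetic difference is that the paper immediately coarsens the error term to $\tha^{p^{+}+1}|\nabla c|^{2}$ (absorbing the logarithm into one extra power of $\tha$), whereas you keep $\tha^{p(c)}(\log\tha)^{2}|\nabla c|^{2}$ and absorb the logarithm via $(\log\tha)^{2}\le C_{\varepsilon}\tha^{\varepsilon}$ inside the H\"older step; both routes produce the same exponent $\delta/(2(2+\delta))$.
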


\begin{proof}
First, using the definition of $\tha$ we have
$$
|\nabla \tha^{\frac{p(c)}{2}}|^2 \le C\left(\tha^{p(c-2)}|\nabla \cD|^2 + \tha^{p^++1}|\nabla c|^2\right).
$$
Next, thanks to \eqref{cl2d} and \eqref{beta-norm2}, we can use the H\"{o}lder inequality to get
$$
\int_{B_R}\tha^{p^++1}|\nabla c|^2\, dx \le CR^{\frac{\delta}{2(2+\delta)}}.
$$
Hence, combining these estimates with \eqref{key1} we deduce \eqref{key2}.
\end{proof}

Now, we have everything prepared to  prove the main result of the paper.

\begin{proof}[Proof of the main theorem]
We show, that if $A$ is sufficiently large then \eqref{dreamA} holds true. Hence, assume that $A$ is fixed (but will be chosen later), we  fix $\varepsilon_0:=1/20$ and corresponding covering $B_{R_0/2}(x_i)$. Assume for a contradiction that there is $y\in \Omega$ such that $|\cD (y)|>A$. Due to the properties of covering, we can find $i$ such that $y \in B_{R_0 /2}(x_i)$ and we have that
\begin{equation}\label{FF1}
(1+A)^{\frac{p_i^{-}}{2}} \le \|\tha ^{\frac{p}{2}}\|_{L^{\infty}(B_{R_0/2}(x_i))} \le C \|\tha^{\frac{p}{2}}\|_{\mathcal{C}^{0,\min\{\frac{\mu_i}{2}, \frac{\delta}{4(2+\delta)}\}}(B_{R_0/2}(x_i))}.
\end{equation}
Next, we can use the equivalence of Campanato spaces $\mathcal{L}^{2, 2+\mu}$ with the space of H\"{o}lder continuous functions $\mathcal{C}^{0, \frac{\mu}{2}}$, with the the embedding constant $\dfrac{C(R_0)}{\mu}$, see e.g. \cite{GM},
$$
\begin{aligned}
&\|u\|_{\mathcal{C}^{0, \frac{\mu}{2}}(B_{R_0}(x_i))}\\
&\le \frac{C(R_0)}{\mu}\left(\|u\|^2_{L^2(B_{R_0}(x_i))} + \sup_{x\in B_{R_0}(x_i)} \sup_{R\in (0,R_0)}R^{-2-\mu} \int_{B_R(x)} |u(y)-(u)_{B_R(x)}|^2\, dy \right)^{\frac12}\\
&\le \frac{C(R_0)}{\mu}\left(\|u\|^2_{L^2(B_{R_0}(x_i))} + \sup_{x\in B_{R_0}(x_i)} \sup_{R\in (0,R_0)}R^{-\mu} \int_{B_R(x)} |\nabla u(y)|^2\, dy \right)^{\frac12},
\end{aligned}
$$
where for the second inequality we used the Poincar\'{e} inequality. Thus, applying the above inequality to $\tha^{\frac{p}{2}}$, using the uniform estimate \eqref{beta-norm2} and the estimate in Lemma~\ref{key-esttha}, we find that
$$
\begin{aligned}
&\|\tha^{\frac{p}{2}}\|_{\mathcal{C}^{0, \frac{\min\{\mu_i,\frac{\delta}{2(2+\delta)}\}}{2}}(B_{R_0}(x_i))}\\
&\le \left(\frac{C(R_0,\delta)}{\mu_i}+1\right)\left(\|u\|^2_{1,2} + \sup_{x\in B_{R_0}(x_i)} \sup_{R\in (0,R_0)}R^{-\min\{\mu_i,\frac{\delta}{2(2+\delta)}\}} \int_{B_R(x)} |\nabla \tha^{\frac{p}{2}}(y)|^2\, dy \right)^{\frac12}\\
&\le C(R_0) \left(\frac{1}{\mu_i}+1\right),
\end{aligned}
$$
Thus, inserting this estimate into \eqref{FF1}, we get
\begin{equation}\label{FF2}
(1+A)^{\frac{p_i^{-}}{2}} \le C(R_0) \left(\frac{1}{\mu_i}+1\right).
\end{equation}
Hence, in case we know that $p(c(x_i))\ge 3$ or $p(c(x_i))\le 3/2$ we have that $\mu_i=\mu_0$, where $\mu_0$ is a constant depending only on data. So \eqref{FF2} reduces to
\begin{equation}\label{FF23}
(1+A)^{\frac{p_i^{-}}{2}} \le C.
\end{equation}
In the opposite case, we use the definition of $\mu_i$, see \eqref{dfmui} to get
\begin{equation}\label{FF24}
\begin{split}
(1+A)^{\frac{p_i^{-}}{2}} &\le C(R_0) \left(\frac{1}{\log_2(1+c_i^1(A)/(Cc_0^i))}+1\right)\le C\left(1+ \frac{c_0^i(A)}{c^i_1(A)}\right)\\
&\le C\left(1+A^{\max\{2,p_i^+\}-\min\{2,p_i^{-}\}}\right),
\end{split}
\end{equation}
where we used $\ln(1+x)\ge x/2$ for $x\in (0,1)$ and the definition \eqref{cA}. Since
$$
\max\{2,p_i^+\}-\min\{2,p_i^{-}\}=\left\{
\begin{aligned}
&p_i^+ - p_i^{-} \le \varepsilon_0 &&\textrm{if } p_i^{+}\ge 2,\quad p_i^{-}\le 2,\\
&2 - p_i^{-} \le |p_i^- - 2| &&\textrm{if } p_i^{+}\le 2,\quad p_i^{-}\le 2,\\
&p_i^+ - 2 \le \varepsilon_0 +|p_i^- - 2| &&\textrm{if } p_i^{+}\ge 2,\quad p_i^{-}\ge 2,\\
\end{aligned} \right.
$$
which follows from the fact that $|p_i^+-p_i^-|\le \varepsilon_0$, we have from \eqref{FF24} that
\begin{equation}\label{FF25}
\begin{split}
(1+A)^{\frac{p_i^{-}}{2}}&\le C\left(1+A^{\varepsilon_0 +|p_i^{-}-2|}\right)\le  C\left(1+A\right)^{\frac{p_i^{-}}{2} \frac{2(\varepsilon_0 +|p_i^{-}-2|)}{p_i^{-}}}.
\end{split}
\end{equation}
Finally, since we consider only the case when $p(c(x_i))\in (3/2,3)$ and $\varepsilon_0= 1/20$, we have that $p_i^{-}\in [3/2-\varepsilon_0, 3]$, we deduce
\begin{equation}\label{FF26}
\begin{split}
(1+A)^{\frac{p_i^{-}}{2}}&\le  C_{fin}\left(1+A\right)^{\frac{p_i^{-}}{2} \frac{24}{29}}
\end{split}
\end{equation}
with $C_{fin}$ independent of $A$. Consequently, choosing $A$ so large that
\begin{equation}\label{FF27}
\begin{split}
(1+A)^{\frac{p_i^{-}}{2}\frac{5}{29}}&>  C_{fin},
\end{split}
\end{equation}
we see that \eqref{FF26} cannot be true, and therefore it contradicts the assumption $\|\cD\|_{\infty}\ge A$. Hence, the proof is complete.
\end{proof}


\begin{thebibliography}{10}

\bibitem{AcMi}
E.~Acerbi and G.~Mingione, \emph{Regularity results for stationary
  electro-rheological fluids}, Arch. Ration. Mech. Anal. \textbf{164} (2002),
  no.~3, 213--259.

\bibitem{B}
H.~Beir\~{a}o~da Veiga, \emph{On the global regularity of shear thinning flows
  in smooth domains}, J. Math. Anal. Appl. \textbf{349} (2009), no.~2,
  335--360.

\bibitem{BKR}
H.~Beir\~{a}o~da Veiga, P.~Kaplick\'{y}, and M.~R\r{u}\v{z}i\v{c}ka,
  \emph{Boundary regularity of shear thickening flows}, J. Math. Fluid Mech.
  \textbf{13} (2011), no.~3, 387--404.

\bibitem{BuMaRa09}
M.~Bul\'{\i}\v{c}ek, J.~M\'{a}lek, and K.~R. Rajagopal, \emph{Mathematical
  results concerning unsteady flows of chemically reacting incompressible
  fluids}, Partial differential equations and fluid mechanics, London Math.
  Soc. Lecture Note Ser., vol. 364, Cambridge Univ. Press, Cambridge, 2009,
  pp.~26--53.

\bibitem{BP}
M.~Bul\'i\v{c}ek and P.~Pust\v{e}jovsk\'a, \emph{On existence analysis of
  steady flows of generalized newtonian fluids with concentration dependent
  power-law index}, J. Math. Anal. Appl. \textbf{402} (2013), 157--166.

\bibitem{BPP}
\bysame, \emph{Existence analysis for a model describing flow of an
  incompressible chemically reacting non-{N}ewtonian fluid}, SIAM J. Math.
  Anal. \textbf{46} (2014), no.~5, 3223--3240.

\bibitem{CGC}
F.~Crispo and C.~R. Grisanti, \emph{On the existence, uniqueness and
  {$C^{1,\gamma}(\overline\Omega)\cap W^{2,2}(\Omega)$} regularity for a class
  of shear-thinning fluids}, J. Math. Fluid Mech. \textbf{10} (2008), no.~4,
  455--487.

\bibitem{CG}
\bysame, \emph{On the {$C^{1,\gamma}(\overline\Omega)\cap W^{2,2}(\Omega)$}
  regularity for a class of electro-rheological fluids}, J. Math. Anal. Appl.
  \textbf{356} (2009), no.~1, 119--132.

\bibitem{CriMa}
F.~Crispo and P.~Maremonti, \emph{A high regularity result of solutions to
  modified {$p$}-{S}tokes equations}, Nonlinear Anal. \textbf{118} (2015),
  97--129.

\bibitem{CriMaNS}
\bysame, \emph{A high regularity result of solutions to a modified
  {$p$}-{N}avier-{S}tokes system}, Recent advances in partial differential
  equations and applications, Contemp. Math., vol. 666, Amer. Math. Soc.,
  Providence, RI, 2016, pp.~151--162.

\bibitem{DER}
L.~Diening, F.~Ettwein, and M.~R\r{u}\v{z}i\v{c}ka,
  \emph{{$C^{1,\alpha}$}-regularity for electrorheological fluids in two
  dimensions}, NoDEA Nonlinear Differential Equations Appl. \textbf{14} (2007),
  no.~1-2, 207--217.

\bibitem{DHHR}
L.~Diening, P.~Harjulehto, P.~H\"ast\"o, and M.~R\r{u}\v{z}i\v{c}ka,
  \emph{Lebesgue and {S}obolev spaces with variable exponents}, Lecture Notes
  in Mathematics, vol. 2017, Springer, Heidelberg, 2011.

\bibitem{ER}
F.~Ettwein and M.~R\r{u}\v{z}i\v{c}ka, \emph{Existence of local strong
  solutions for motions of electrorheological fluids in three dimensions},
  Comput. Math. Appl. \textbf{53} (2007), no.~3-4, 595--604.

\bibitem{Gbook}
M.~Giaquinta, \emph{Multiple integrals in the calculus of variations and
  nonlinear elliptic systems}, Annals of Mathematics Studies, vol. 105,
  Princeton University Press, Princeton, NJ, 1983.

\bibitem{GM}
M.~Giaquinta and G.~Modica, \emph{Regularity results for some classes of higher
  order nonlinear elliptic systems}, J. Reine Angew. Math. \textbf{311/312}
  (1979), 145--169.

\bibitem{HPMR}
J.~Hron, J.~M\'alek, P.~Pust\v{e}jovsk\'a, and K.~R. Rajagopal, \emph{On the
  modeling of the synovial fluid}, Advances in Tribology \textbf{2010} (2010),
  Article ID 104957.

\bibitem{KMS}
P.~Kaplick\'y, J.~M\'alek, and J.~Star\'a, \emph{Full regularity of weak
  solutions to a class of nonlinear fluids in two dimensions---stationary,
  periodic problem}, Comment. Math. Univ. Carolin. \textbf{38} (1997), no.~4,
  681--695.

\bibitem{KMSJ}
\bysame, \emph{On global existence of smooth two-dimensional steady flows for a
  class of non-{N}ewtonian fluids under various boundary conditions}, Applied
  nonlinear analysis, Kluwer/Plenum, New York, 1999, pp.~213--229.

\bibitem{MNRR}
J.~M\'alek, J.~Ne\v{c}as, M.~Rokyta, and M.~R\r{u}\v{z}i\v{c}ka, \emph{Weak and
  measure-valued solutions to evolutionary {PDE}s}, Applied Mathematics and
  Mathematical Computation, vol.~13, Chapman \& Hall, London, 1996.

\bibitem{MR}
J.~M\'alek and K.~R. Rajagopal, \emph{A thermodynamic framework for a mixture
  of two liquids}, Nonlinear Anal. Real World Appl. \textbf{9} (2008), no.~4,
  1649--1660.

\bibitem{PP}
P.~Pust\v{e}jovsk\'a, \emph{Mathematical modeling of synovial fluids flow}, W
  DS' 08 Proceedings of Contributed Papers, Part III (2008), 32--37.

\bibitem{PT}
\bysame, \emph{Biochemical and mechanical processes in synovial
  fluid--modeling, analysis and computational simulations. ph.d. thesis},
  Master's thesis, Charles University in Prague, 2012.

\bibitem{Sin}
C.~Sin, \emph{Global regularity of weak solutions for steady motions of
  electrorheological fluids in $3${D} smooth domain}, J. Math. Anal. Appl.
  \textbf{461} (2018), no.~1, 752--776.

\bibitem{W}
K.~O. Widman, \emph{H\"older continuity of solutions of elliptic systems},
  Manuscripta Math. \textbf{5} (1971), 299--308.

\end{thebibliography}

\providecommand{\bysame}{\leavevmode\hbox to3em{\hrulefill}\thinspace}
\providecommand{\MR}{\relax\ifhmode\unskip\space\fi MR }
\providecommand{\MRhref}[2]{%
  \href{http://www.ams.org/mathscinet-getitem?mr=#1}{#2}
}
\providecommand{\href}[2]{#2}

\end{document}